\declaretheorem[name=Theorem,numberwithin=section]{thm}
\declaretheorem[name=Lemma,sibling=thm]{lemma}
\declaretheorem[name=Proposition,sibling=thm]{prop}
\declaretheorem[name=Corollary,sibling=thm]{cor}
\declaretheorem[name=Definition, style=definition,sibling=thm]{defn}
\declaretheorem[name=Remark,style=remark,sibling=thm]{rmk}
\setlist[enumerate]{itemsep=0mm}
\newcommand{\bvar}{\bar{\varphi}}
\newcommand*\diff{\mathop{}\!\mathrm{d}}   
\newcommand{\dt}{d_{\theta}}  
\newcommand{\dx}{d_{\X}} 
\newcommand{\ess}{\operatorname{ess}}
\newcommand{\dxa}{\dx^{\alpha}}
\newcommand{\dist}{\bigl(\dt(\om,\tom)+\dx^\alpha(x,y)\bigr)}
\newcommand{\mE}{\mathbb{E}}  
\newcommand{\F}{\mathcal{F}} 
\newcommand{\Glm}{\Gamma(l_{\min})}
\newcommand{\gta}{G_{\theta,\alpha}^+} 
\newcommand{\gtan}{(\gta,\|\cdot\|_\ta)}
\newcommand{\HH}{H}
\newcommand{\id}{\operatorname{id}}  
\newcommand{\imag}{\mathrm{i}}  
\newcommand{\im}{\operatorname{im}}  
\newcommand{\K}{\mathcal{K}}  
\newcommand{\cL}{\mathcal{L}}  
\newcommand{\mat}{GL(d,\mathbb{R})}
\newcommand{\nn}{\nu_{\sigma^{-n}\om}}
\newcommand{\op}{\Omega^+} 
\newcommand{\opx}{\Omega^+\times\X}
\newcommand{\om}{\omega}
\newcommand{\tom}{\tilde{\omega}}
\newcommand{\omg}[1]{\omega^{(#1)}}
\newcommand{\omn}{\omega^{(n)}}
\newcommand{\tomn}{\tom^{(n)}}
\newcommand{\mP}{\mathbb{P}} 
\newcommand{\cP}{\mathcal{P}} 
\newcommand{\pn}{\psi(n,\sigma^{-n}\om)}
\newcommand{\vn}{\varphi(n,\sigma^{-n}\om)}
\newcommand{\R}{\mathbb{R}}
\newcommand{\proj}[1]{\mathbb{R}P^{#1}}  
\newcommand{\real}{\operatorname{Re}} 
\newcommand{\cS}{\mathcal{S}}  
\newcommand{\supp}{\operatorname{supp}}   
\newcommand{\shft}{(\op,\mathcal{F}^+,\mP)}
\newcommand{\SPhi}{\Sigma}
\newcommand{\SM}{\mathfrak{M}}
\newcommand{\ta}{{\theta,\alpha}}
\newcommand{\tnu}{\tilde{\nu}}
\newcommand{\tvar}{\tilde{\varphi}}
\newcommand{\ovW}{\overline{W}}
\newcommand{\X}{\mathcal{X}}
\newcommand{\nso}{{\sigma^n\omega^{(n)}=\omega}}
\newcommand{\oso}{{\sigma\omega'=\omega}}
\newcommand{\tnso}{{\sigma^n\tom^{(n)}=\tom}}
\newcommand\numberthis{\addtocounter{equation}{1}\tag{\theequation}}
\title{On Transfer Operators for Markovian Products of Invertible Random Matrices}
\author{Fan Wang\thanks{Department of Statistics, University of Oxford. Email: \href{mailto:wangfan.ox@gmail.com}{wangfan.ox@gmail.com}}  \and David Steinsaltz\thanks{Department of Statistics, University of Oxford. Email: \href{mailto:steinsal@stats.ox.ac.uk}{steinsal@stats.ox.ac.uk} }}
\begin{document}
\maketitle

\section{Introduction}
\label{sec:intro}
The Lyapunov exponent for products of random matrices plays a similar foundational role in the asymptotic theory of non-commutative random variables to that played by the Law of Large Numbers in classical probability theory. Consequently,
methods for computing Lyapunov are of widespread interest,
and in recent years attention has focused on methods
established in dynamical systems.

Transfer operators, which are usually symbolically defined to be the ``adjoint'' of the pullback of a shift operator, are also essential and powerful in multiple areas in probability theory, such as classical Markov chain theory \cite{Nagaev},  stochastic differential equations, etc. They are also known as Ruelle--Perron--Frobenius operators and can be used to study the periodic points in symbolic dynamics via zeta functions \cite{PP},  
The applications of transfer operator to products of random matrices was introduced by 
 \cite{Tutubalin,clt}, where 
 the derivative of a perturbed transfer operator was shown to be
 equal to the Lyapunov exponent. This method can be further
 used to obtain an analogue of the Central Limit Theorem  \cite{Bougerol}. In a recent paper \cite{Pollicott} Pollicott also used the nuclearity of certain types of transfer operators (which was first given by \cite{Ruelle}) to provide an efficient algorithm for computing the Lyapunov exponent for i.i.d.\ products of positive matrices.

Research attention, whether in computing the Lyapunov exponent or applying the technique of transfer operators to study products of random matrices, has been devoted primarily to i.i.d.\ products of random matrices. The more general results for Markov chains
of random matrices have been neglected, and sometimes erroneously
assumed to be entirely straightforward.

The difficulty in the generalisation arises from the essential difference between an i.i.d.\ shift and a Markovian shift: In the Markovian setting
the shift needs to store the current state in its ``cache''.

In \cite{MPPM} we focus on the Markovian products of positive matrices. We resolve the problem by generalizing the transfer operator to a matrix of transfer operators, and show that this larger construct has the necessary spectral properties. This leads to a complete generalization of the algorithm proposed by Pollicott.

In the present article we address the case of Markovian products of invertible (not necessarily positive) matrices chosen from a strongly irreducible, contracting, finite set of matrices. Without positivity a simple formula like that of Pollicott does not hold, but we show that we can reconstruct the following crucial elements underlying the link between the transfer operators and the Lyapunov exponent:

\begin{enumerate}[label=(\alph*)]
\item proof that the invariant measure is proper (definition is given by \autoref{defn:properness}) --- see \autoref{prop:proper.LRDS};
\item construction of transfer operators in the language of random dynamical systems --- see \eqref{eq:defn.L.mkv};
\item construction of an appropriate function space where the transfer operators act --- see \Cref{sec:gta};
\item proof of a Lasota--Yorke inequality --- see \autoref{thm:basic.ineq.Lt}.
\end{enumerate}

We follow the path of \cite{Bougerol} to give Markovian generalisations of the classical results on i.i.d.\ random matrix products in \Cref{sec:mkv}. And then we apply generalised Markovian transfer operators to Markovian products of matrices to obtain the spectral theorem in \Cref{sec:to}.
The choice of the function space (on which transfer operators will be acting) is given in \Cref{sec:gta}; the definition of a Markovian transfer operator can be found in \Cref{sec:mkv.to}; and finally the spectral theorems are proved in \Cref{sec:spec}.

We have tried to make clear in our
proofs where the Markov property is being used, as some of the arguments carry over equally
--- or with additional assumptions weaker than Markov --- to more general stationary shifts. 
The most significant application of the Markov property is to show that
the stationary distribution factorises into a finite convex combination
of conditional distributions, corresponding
to the $k$ different possible values of the most recent matrix
in the product. This is required to show that the stationary shift is
{\em proper}, meaning that it puts zero probability onto proper
linear subspaces of $\R^d$ --- Propositions \ref{prop:proper.LRDS} and \ref{prop:limit.range} --- and consequently the fundamental
identity --- Theorem \ref{thm:Furstenberg23.LRDS} --- of the Lyapunov exponent as then mean log growth
of a random vector selected from the stationary distribution.
If we know this to hold for some other reason then we may apply
our results more generally.

\section{Notation and Definitions}
\label{sec:notations}
\begin{defn}
	Let $E$ be a measure space and $\mathbb{T}=\mathbb{Z}$ or $\mathbb{N}$, then $(\Omega,\F)$ is said to be a \emph{shift space} over $E$ if
	\[
	\Omega=E^\mathbb{T}=\left\{\omega=(\omega_i)_{i\in\mathbb{T}}: \om_i\in E\right\},
	\]
	equipped with the sigma-algebra $\F$ generated by cylinder sets
	\[\{\omega\in\Omega: \om_{i_1}=j_1, \dots, \om_{i_m}=j_m \},\quad  i_1,\dots, i_m\in\mathbb{T},  j_1\dots, j_m\in E,\]
	the shift map $\sigma:\Omega\to\Omega$ satisfying $(\sigma\om)_i=\om_{i+1}$ for each $i\in\mathbb{T}$, and an invariant measure  $\mP$ with respect to $\sigma$.  
	
	A shift space $(\Omega,\F,\mP,\sigma)$ is said to be a \emph{Markovian shift} if $\mP$ is Markovian. It is called a \emph{full shift} if $\mP$ associates positive probability to any non-empty cylinder sets. $(\Omega, \F,\mP,\sigma)$ is said to be a \emph{shift over $k$ symbols} if $E=\{1,2,\dots, k\}$. (Unless otherwise indicated, \emph{we always assume the shift space is full}.)
	
	In particular, when $\mathbb{T}=\mathbb{Z}$, we say the shift space is \emph{two-sided}; and when $\mathbb{T}=\mathbb{N}$, say it is \emph{one-sided}. A one-sided shift space is usually denoted by $(\op,\F^+,\mP,\sigma)$.
\end{defn}

With a fixed measure space $E$ we obtain a one-sided shift space $\op$ from a two-sided $\Omega$ by restricting $\mathbb{Z}$ to $\mathbb{N}$, i.e., 
\begin{equation}\label{eq:r}
r^+:\Omega\to\op,\quad r^+(\omega)=(\om_0,\om_1,\dots).
\end{equation}

With a fixed measure space $E$ we obtain a one-sided shift space $\op$ from a two-sided $\Omega$ by restricting $\mathbb{Z}$ to $\mathbb{N}$, i.e., 
\begin{equation}\label{eq:r}
r^+:\Omega\to\op,\quad r^+(\omega)=(\om_0,\om_1,\dots).
\end{equation}

Denote by $\Omega^+$ the associated one-sided shift space of $\Omega$ by ``throwing away the past''.

In this article, unless otherwise indicated, \emph{we alway assume the shift space is full}.

\begin{defn}
Let $X$ be a measure space, $\Omega$ be a shift space. Then the measurable function
\[
	\varphi: \mathbb{T}\times\Omega\times X \to X, \quad (n,\om, x)\mapsto\varphi(n,\om,x)=:\varphi(n,\om)x
\] 
is said to be a \emph{discrete-time random dynamical system (RDS)} acting on $X$ if 
\begin{enumerate}[label=(\alph*)]
\item $\varphi(0,\om)=\id_X$ for any $\om\in\Omega$;
\item $\varphi(m+n,\om)=\varphi(n,\sigma^m\om)\circ\varphi(m,\om)$ for any $m,n\in\mathbb{T}, \om\in\Omega$.
\end{enumerate}
When $\varphi$ is an RDS acting on $\mathbb{R}^d$, after fixing a basis we can assume $\varphi(n,\om)$ is a $d\times d$ matrices for each $(n,\om)\in\mathbb{T}\times\Omega$. We call such a $\varphi$ a \emph{linear random dynamical system (LRDS)}.
\end{defn}

One can easily restrict an RDS $\varphi$ on $\mathbb{Z}\times\Omega$ with two-sided time to $\varphi^+=\varphi|_{\mathbb{N}\times\Omega^+}$ with one-sided time.

\begin{defn}
The \emph{Lyapunov exponent} $\gamma$ associated with an LRDS $\varphi$ is given by
\[ \gamma := \lim_{n\to\infty}\dfrac{1}{n}\mE[\log\|\varphi(n,\om)\|].\]
The limit exists by the sub-additive inequality and does not depend on the choice of the matrix norm $\|\cdot\|$ since matrix norms are equivalent.
\end{defn}

\begin{defn}
If an RDS $\varphi$ acting on a measure space $X$, if $\varphi(1,\om)$ is invertible as an endomorphism (in other words, it is an automorphism) of $X$ for each $\om$, we say $\varphi$ is \emph{spatially invertible}.
\end{defn}

Denote by $\proj{d-1}$ the associated projective space of $\mathbb{R}^d$. Then when $\varphi$ is a spatially invertible LRDS acting on $ \mathbb{R}^d $, it induces a well-defined RDS $\bvar$ acting on $\proj{d-1}$ by
\[\bvar(n, \om)[x]=[\varphi(n,\om)x],\quad [x]\in\proj{d-1},\]
where $ [x] $ denotes the equivalent class containing the non-zero vector $ x\in \mathbb{R}^d $.
We call $\bvar$ the \emph{normalized random dynamical system (NRDS)} induced by the spatially invertible LRDS $\varphi$.

\begin{defn}\label{defn:skew.prod}
Given an RDS $\varphi$ acting on a measure space $X$, define the mapping
\[
	\SPhi: (\om, x)\mapsto (\sigma\om, \varphi(1,\om)x)
\]
from $\Omega^+\times X$ to itself, called the \emph{skew product} of the shift space $\op$ and the RDS $\varphi$. 

An \emph{invariant measure} $\nu$ for the RDS $\varphi$ is defined to be a probability measure on $\Omega\times X$ satisfying $\SPhi\nu=\nu$ and $\pi_{\Omega}\nu=\mP$, where $\pi_\Omega:\Omega\times X\to\Omega$ is the natural projection $\pi_\Omega(\om, x)=\om.$
\end{defn}
Since $ \mathbb{R}P^{d-1} $ is compact, the NRDS $ \bvar $, induced by an LRDS $\varphi$, admits an invariant measure on $\mathbb{R}P^{d-1}$ by  Theorem 1.5.10 of \cite{Arnold}. Moreover, since $\proj{d-1}$ is Polish, by Proposition 1.4.3 of \cite{Arnold} there exists a unique factorisation of the invariant measure. That is,
symbolically we may represent the invariant measure as
\[
	\nu(\diff\om, \diff x)=\nu_\om(\diff x)\mP(\diff\om).
\]

\begin{rmk}\label{rmk:inv.meas.corsp}
Let $\varphi$ be a spatially invertible RDS with one-sided time $\op$. 
We can extend it to an RDS $\tvar$ with two-sided time by defining $\tvar(1,\om)=\varphi(1,r^+(\om))$ and $\tvar(-1,\om)=\varphi(1,r^+(\sigma^{-1}\om))^{-1}$ for $\om\in\Omega$, where $r^+:\Omega\to\op$ is the natural restriction map given by \eqref{eq:r}. By Theorem 1.7.2 of \cite{Arnold} there is a
one-to-one correspondence between the invariant measures for $\varphi$ and those for $\tvar$.
\end{rmk}

\begin{defn}
Let $\varphi$ be a spatially invertible LRDS over the one-sided full shift $\Omega^+$. Denote by $\cS_\varphi$ the semigroup
\[\cS_\varphi=\{\varphi(n,\om): n\ge 0, \om\in\Omega^+ \}.\]
We say $\varphi$ is \emph{strongly irreducible} if there 
is no finite family of proper linear subspace $V_1, \dots, V_k$ of $\mathbb{R}^d$ such that
\[A(V_1\cup\cdots\cup V_k)=V_1\cup\cdots\cup V_k,\]
for each $A\in\cS_\varphi$. 

We say $\varphi$ is \emph{has index $r$} if there exists a sequence $(M_i)_{i\ge 1}\subset \cS_\varphi$ such that $M_i/\|M_i\|$ converges to a matrix of rank $r$ as $i\to\infty$. If $r=1$, we also say $\varphi$ is \emph{contracting}.
\end{defn}
We use throughout the standard notation $f^+(x):=\max\{f(x),0\}$ for any measurable function $f$.

\section{Markovian Products of Matrices}
\label{sec:mkv}
In this section, we generalise the standard results on i.i.d.\ products of random matrices (which can be found in, for example, \cite{Bougerol}) to Markovian matrix products using the language of random dynamical systems. The main goal here is to prove 
\autoref{prop:proper.LRDS}, asserting that the invariant measure with respect to a Markovian RDS is proper (see \autoref{defn:properness}).

\begin{rmk}
	Note that the element $\omega$ of a one-sided Markovian shift
	represents the infinite past of the Markov chain (in the
	usual representation) running off to the right, with $\omega_0$
	the current state. The process advances one step into the
	future by $\SPhi^{-1}$, with the range of possible steps
	corresponding to the non-uniqueness of $\sigma^{-1}$.
	Hence, note that the matrices that define steps of
	the Markov chain on the space $X$
	are the inverses of the matrices that define
	$\varphi(1,\omega)=M(\omega)$ (see \eqref{eq:Phi}).
\end{rmk}

\begin{thm}\label{thm:Furstenberg1.LRDS}
Let $(\Omega^+,\mathcal{F}^+,\mathbb{P},\sigma)$ be a one-sided shift, $\varphi$ be a spatially invertible LRDS. If $\mE[\log^+\|\varphi(1, \omega)\|]<\infty$,  
then the limit
\[
	\lim_{n\to\infty}\dfrac{1}{n}\log\|\varphi(n,\omega)\|
\]
exists for $\mathbb{P}$-almost all $\omega$. Denote the limit by $\gamma(\omega)$. If $\mP$ is ergodic we have that $\gamma(\omega)$ equals the Lyapunov exponent $\gamma$,  $\mathbb{P}$-a.s..
\end{thm}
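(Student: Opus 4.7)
The plan is to recognise $f_n(\om):=\log\|\varphi(n,\om)\|$ as a subadditive cocycle over the measure-preserving system $(\op,\F^+,\mP,\sigma)$ and then invoke Kingman's subadditive ergodic theorem. The cocycle property $\varphi(m+n,\om)=\varphi(n,\sigma^m\om)\circ\varphi(m,\om)$, combined with submultiplicativity of the operator norm, yields
\[
f_{m+n}(\om)\le f_m(\om)+f_n(\sigma^m\om),
\]
so the sequence is genuinely subadditive over $\sigma$. Iterating gives $f_n(\om)\le \sum_{j=0}^{n-1}f_1(\sigma^j\om)$, hence $\mE[f_n^+]\le n\,\mE[\log^+\|\varphi(1,\om)\|]<\infty$ by the standing hypothesis. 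This is precisely the integrability condition required by Kingman; no bound on the negative part is needed, since the limit is allowed to take the value $-\infty$.

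Kingman's theorem then produces a $\sigma$-invariant measurable function $\gamma(\om)\in[-\infty,\infty)$ satisfying
\[
\gamma(\om)=\lim_{n\to\infty}\frac{1}{n}\log\|\varphi(n,\om)\| \qquad \mP\text{-a.s.,}
\]
and moreover $\mE[\gamma]=\lim_n \frac{1}{n}\mE[f_n]=\inf_n \frac{1}{n}\mE[f_n]$, where the last equality is Fekete's lemma applied to the subadditive real sequence $(\mE[f_n])_{n\ge 1}$ (subadditivity of which follows from that of $(f_n)$ and the $\sigma$-invariance of $\mP$). If $\mP$ is ergodic, any $\sigma$-invariant function is $\mP$-a.s.\ constant, so $\gamma(\om)$ equals the scalar $\mE[\gamma]$, and by the definition of the Lyapunov exponent recorded earlier in the excerpt this common value is exactly $\gamma$.

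There is no genuine obstacle here: the entire content is the observation that the RDS cocycle axiom plus norm submultiplicativity give subadditivity, and that the positive-part integrability hypothesis is exactly what Kingman demands. The only mild subtlety is that $\log\|\varphi(1,\om)\|$ is not assumed to lie in $L^1$, so the limit may be $-\infty$; the version of Kingman's theorem used, for example in \cite{Bougerol}, accommodates this case directly, so no extra argument is required. Measurability of each $f_n$ is immediate from the joint measurability of $\varphi$, and $\sigma$-invariance of $\mP$ is built into the definition of a shift space.
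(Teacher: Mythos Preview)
Your proposal is correct and follows exactly the same route as the paper, which simply cites the subadditive ergodic theorem; you have merely spelled out the standard verification of subadditivity and the positive-part integrability that the paper leaves implicit.
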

\begin{proof}
This is an easy consequence of the sub-additive ergodic theorem (see e.g. \cite{Durrett}).
\end{proof}

When studying products of random matrices using the language of RDS, we usually focus on one-sided shift spaces. This creates
some difficulties, as the shift map $\sigma$ is then not invertible. The following proposition (a version of Theorem 1.7.2 of \cite{Arnold}) builds a bridge between the one-sided
and two-sided processes.

\begin{prop}\label{prop:limit.inv.meas}
Let $(\Omega,\mathcal{F},\mathbb{P},\sigma)$ be a two-sided  Markovian full shift over $k$ symbols, $\psi$ be an RDS acting on the Polish space $X$, determined by $\psi(1,\omega)=M(\omega)$, where $M(\om)$  depends only on the coordinate $\om_0$. Let $\nu$ be an invariant measure for $\psi|_{\mathbb{N}\times\op}$, and $\nu(\diff\om, \diff x)=\nu_\om(\diff x)\mP(\diff\om)$ be its unique factorisation. Then
\begin{enumerate}[label=(\alph*)]
	\item \label{it:first_coord}
	$\nu_\om$ also depends only on the coordinate $\om_0$. In other words, there exist $k$ probability measures $\nu_1,\dots, \nu_k$ such that $\nu=\sum_{i=1}^k q_i\nu_i\otimes \delta_{\{\omega:\omega_0=i\}}$, where $q_i=\mP(\om_0=i)$.
\item \label{it:mu_converges} $\mu_{n,\omega} : = \psi(n,\sigma^{-n}\om)\nu_{\sigma^{-n}\om}$ converges weakly to a probability measure $\tnu_\om$ as $n\to\infty$, where $\tnu$ is an invariant measure for $\psi$ with two-sided time;
\item \label{it:Q_converges} Let $Q$ be any finite product $M_{i_m}\cdots M_{i_1} M_{i_0}$. Then
\[
\lim_{n\to\infty}\psi(n,\sigma^{-n}\om)Q \nu_{i_0}=\tnu_\om \, ,
\]
for $P^m$-almost every $Q$ and $\mP$-almost every $\omega$. 
\end{enumerate}
\end{prop}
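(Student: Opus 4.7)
The plan is to handle all three parts through a two-sided extension argument. For part~(a), the $\SPhi$-invariance of $\nu$ only yields an \emph{averaged} equivariance in the one-sided shift because $\sigma$ is not invertible on $\op$; concretely, conditioning on $\sigma\om$ and using the Markov property gives
\[
\sum_{i}\pi_{i,\tom_0}\,M(i)_{*}\,\nu_{(i,\tom)}\;=\;\nu_{\tom}
\]
for $\mP$-a.e.\ $\tom$, with $\pi_{i,j}=\mP(\om_0=i\mid\om_1=j)$. I then pass to the two-sided extension $(\Omega,\F,\mP,\sigma)$ via Remark~\ref{rmk:inv.meas.corsp} with its invariant measure $\tnu$: on $\Omega$ the shift is invertible and the spatial invertibility of $\psi$ promotes the equivariance to the \emph{pointwise} identity $\tnu_{\sigma\om}=M(\om_0)_{*}\tnu_\om$. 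Iterating this backwards and invoking the contracting behaviour of the cocycle (a standing hypothesis of the paper), I argue that $\tnu_\om$ must be measurable with respect to the past $\sigma$-algebra $\sigma(\ldots,\om_{-1},\om_0)$. The one-sided $\nu$ is the marginal of $\tnu$ under $r^+$, so disintegration gives $\nu_{r^+\om}=\mE[\tnu_\om\mid\sigma(\om_0,\om_1,\ldots)]$; since the Markov property makes past and future conditionally independent given $\om_0$, this conditional expectation collapses to $\mE[\tnu_\om\mid\om_0]$, a function of $\om_0$ alone. Setting $\nu_i:=\mE[\tnu_\om\mid\om_0=i]$ produces the advertised decomposition.

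For part~(b), using the identification from (a) I have $\nu_{\sigma^{-n}\om}=\nu_{\om_{-n}}=\mE[\tnu_{\sigma^{-n}\om}\mid\om_{-n}]$, whereas the two-sided pointwise invariance yields $\tnu_\om=\psi(n,\sigma^{-n}\om)_{*}\tnu_{\sigma^{-n}\om}$. Consequently $\mu_{n,\om}-\tnu_\om$ is the pushforward under the random matrix product $\psi(n,\sigma^{-n}\om)$ of the signed-measure fluctuation $\nu_{\om_{-n}}-\tnu_{\sigma^{-n}\om}$, and the contracting action of the cocycle as $n\to\infty$ drives this pushed-forward fluctuation to zero in the weak topology, giving $\mu_{n,\om}\to\tnu_\om$ $\mP$-a.s. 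The invariance of $\tnu$ under the two-sided skew product follows directly from the pointwise equivariance already established.

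For part~(c), the strategy is essentially the same. Fixing a $P^m$-typical $Q=M_{i_m}\cdots M_{i_0}$, the measure $Q\nu_{i_0}$ is a fixed probability on $\X$, and
\[
\psi(n,\sigma^{-n}\om)_{*}Q\nu_{i_0}\;-\;\tnu_\om\;=\;\psi(n,\sigma^{-n}\om)_{*}\bigl[\,Q\nu_{i_0}-\tnu_{\sigma^{-n}\om}\,\bigr]
\]
tends to $0$ weakly as $n\to\infty$ by the same cocycle-contraction argument, regardless of the specific starting measure $Q\nu_{i_0}$; the $P^m$-a.e.\ qualifier merely excludes the null set of Markov-incompatible paths on which $Q\nu_{i_0}$ is not well-defined via the iterated invariance from (a). The most delicate step throughout is the past-measurability of $\tnu_\om$ claimed in (a): this is where the strongly-irreducible, contracting structure of the LRDS does the real work, and once it is secured the convergence statements in (b) and (c) are two instances of the same contraction principle applied to different initial distributions.
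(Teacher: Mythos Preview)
Your proposal has a genuine gap: you repeatedly invoke ``the contracting behaviour of the cocycle'' as the engine that drives (a), (b) and (c), but Proposition~\ref{prop:limit.inv.meas} is stated for an arbitrary Polish space $X$ and carries no contracting (or even linear) hypothesis. Contraction is therefore unavailable here, and in fact unnecessary.

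For (b) the paper simply cites Arnold's Theorem~1.7.2: for any bounded Borel $f$, the sequence $\mu_{n,\om}(f)=\psi(n,\sigma^{-n}\om)\nu_{\sigma^{-n}\om}(f)$ is a \emph{bounded martingale} with respect to the filtration $\F_{-m}^+=\sigma^m\F^+$, so it converges $\mP$-a.s.\ by the martingale convergence theorem. No dynamical contraction is used. This same theorem already yields that the limit $\tnu_\om$ is $\F^-$-measurable (measurable in $(\ldots,\om_{-1},\om_0)$), because $\mu_{n,\om}$ is $\F^+_{-n}$-measurable. Once you have that, your route to (a) is essentially the paper's: $\nu_\om=\mE[\tnu_\om\mid\F^+]$, and the Markov property collapses this to $\mE[\tnu_\om\mid\om_0]$. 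So your outline for (a) is salvageable, but the past-measurability step should be justified by the martingale argument, not by iterating and appealing to contraction.

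For (c) the paper does \emph{not} argue that any starting measure is swallowed by contraction. Instead it reformulates the claim as the almost-sure convergence of $\psi(n,\sigma^{-n-m}\om)\nu_{\om_{-m-n}}$ and controls the $L^2$ increments: using the invariance of $\nu$ one shows
\[
\mE\bigl|\psi(m+n,\sigma^{-m-n}\om)\nu_{\om_{-m-n}}(f)-\psi(n,\sigma^{-n}\om)\nu_{\sigma^{-n}\om}(f)\bigr|^2
\]
telescopes so that the sum over $n$ is bounded by $2m\|f\|_\infty$, whence the series of squared differences is almost surely finite and the limit agrees with $\tnu_\om$ from (b). Your ``same contraction principle applied to a different initial distribution'' does not supply this; even in the genuinely contracting LRDS setting, pushing forward a fixed signed measure by a contracting map does not by itself force weak convergence to zero without further quantitative input. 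Replace the contraction appeals by the martingale and $L^2$ arguments and the proof goes through.
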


\begin{proof}
For \ref{it:first_coord}, since $\nu_\om$ is $\F^+$-measurable, $\nu_\om=\mE[\nu_\om|\F^+]=\mE[\nu_\om|\om_0]$, is a function depending only on $\om_0$. Let $\nu_i$ be $\nu_\om$ when $\om_0=i$, then
\[\nu=\int\nu_\om\mP(\diff\om)=\sum_{i=1}^k q_i\nu_i,\]
where $q_i=\mP(\om_0=i)$.

The proof of \ref{it:mu_converges} can be found in \cite{Arnold} Theorem 1.7.2. 
If we define a filtration $\mathcal{F}_{-m}^+=\sigma^{m}\F^+$, then
for any bounded measurable function $f:X\to \R$, $\mu_{n,\om} (f)$
is a bounded martingale with respect to $\{\mathcal{F}_{-m}^+:m\ge0\}$. The result then follows by the
Martingale Convergence Theorem.

Part \ref{it:Q_converges} follows along the same lines as in \cite[Lemma 2.13]{frontiere} or \cite[Lemma II.2.1]{Bougerol}. Because the shift
is full, the statement we wish to prove is equivalent to the claim that
\[
	\lim_{n\to\infty}\psi(n,\sigma^{-n}\om) Q M_{\omega_0}\nu_{\om} =\tnu_\om
\]
for $P^m$-almost every $Q$ and $\mP$-almost every $\omega$. 
And this is equivalent to
\begin{equation} \label{E:omegamn}
	\lim_{n\to\infty}\psi(n,\sigma^{-n-m}\om) \nu_{\omega_{-m-n}} =\tnu_\om
\end{equation}
almost surely in the distribution $\mathbb{Q}_n$ that makes $(\dots,\omega_{-n-2},\omega_{-n-1})$ and $(\omega_{-n},\omega_{-n+1},\dots)$ independent choices from the left-sided and right-sided
restrictions of $\mP$ respectively. Since the shift
is full and Markovian, $\diff \mathbb{Q}_n / \diff \mP$ is bounded
away from 0,
this is equivalent to \eqref{E:omegamn} for $\mP$-almost every $\omega$. By part
\ref{it:mu_converges} this will follow if we show that
\begin{equation}
\label{E:almostsure}
	\sum_{n=1}^\infty \left|\psi(m+n, \sigma^{-m-n}\om) \nu_{\omega_{-m-n}}(f)-\pn\nn(f)\right|^2
\end{equation}
is almost surely finite, and this will follow {\em a fortiori} if the expected value
of \eqref{E:almostsure} is finite.

For any bounded Borel function $f$ on $\op\times X$, 
\begin{align*}
 \mE&\left|\psi(m+n, \sigma^{-m-n}\om) \nu_{\omega_{-m-n}}(f)-\pn\nn(f)\right|^2 \\
&\qquad= \mE\left|\pn\psi(m, \sigma^{-m-n}\om) \nu_{\sigma^{-m-n}\om}(f) \right|^2
+ \mE \left| \pn\nn(f) \right|^2 \\
&\qquad\qquad -2\mE\bigl[\left(\pn\psi(m, \sigma^{-m-n}\om) \nu_{\sigma^{-m-n}\om}(f) \right)\left( \pn\nn(f) \right)\bigr]
\end{align*}
Taking the conditional expectation with respect to $\F^+_{-n}$, and using the fact
that the action of $\psi(n,\sigma^{-n}\omega)$ on a measure is linear, we have
\begin{align*}
	\mE&\bigl[\left(\pn\psi(m, \sigma^{-m-n}\om) \nu_{\sigma^{-m-n}\om}(f) \right)\left( \pn\nn(f) \right)\bigr] \\
	&=\mE\Bigl[\mE\bigl[\left(\pn\psi(m, \sigma^{-m-n}\om) \nu_{\sigma^{-m-n}\om}(f) \right)\left( \pn\nn(f) \right) \, | \, \F_{-n}^+\bigr] \Bigr]\\
	&=\mE\Bigl[\pn\mE\bigl[\left(\psi(m, \sigma^{-m-n}\om) \nu_{\sigma^{-m-n}\om}(f) \right) \, | \, \F_{-n}^+\bigr] \left( \pn\nn(f) \right)\Bigr]\\
	&=\mE\Bigl[\left( \pn\nn(f) \right)^2\Bigr].
\end{align*}
by the invariance of $\nu$. Thus
\begin{align*}
	\sum_{n=1}^\infty &\mE\left|\pn\psi(m, \sigma^{-m-n}\om) \nu_{\omega_{-m-n}}(f)-\pn\nn(f)\right|^2\\
		&\le \sum_{n=1}^\infty \mE\left|\pn\psi(m, \sigma^{-m-n}\om) \nu_{\omega_{-m-n}}(f)\right|^2- \mE \left|\pn\nn(f)\right|^2\\
	&\le 2m \|f\|_\infty.
\end{align*}
\end{proof}

\begin{rmk}
	Only part \ref{it:first_coord} depends entirely on the Markov
	assumption. Part \ref{it:Q_converges} depends on the weaker
	assumption that the conditional distribution of $\omega_0$
	given $\sigma\omega$ is bounded away from 0.
\end{rmk}

\begin{defn}\label{defn:properness}
	Let $\nu$ be a probability measure on $\Omega\times \proj{d-1}$ with factorisation 
	${\nu(\diff\om, \diff x)=\nu_\om(\diff x)\mP(\diff\om)}$, then $\nu$ is said to be \emph{proper} if for any hyperplane $H$ of $\mathbb{R}^d$, 
	\[
	\nu(\overline{H})=\int_\Omega \nu_\om(\overline{H})\mP(\diff\om)=0.
	\] 
\end{defn}

\begin{prop}\label{prop:proper.LRDS}
Let $(\Omega^+,\mathcal{F}^+,\mathbb{P},\sigma)$ be a one-sided Markovian full shift over $k$
symbols, and $\varphi$ a strongly irreducible and spatially invertible LRDS acting on 
$\mathbb{R}^d$ determined by $\varphi(1,\omega)=M(\omega)$, where $M(\omega)$ depends only on the 
first coordinate $\omega_0$. Let $\psi$ be the NRDS acting on $\proj{d-1}$ induced by $\varphi$. 
Then the invariant measure $\nu$ for $\psi$ is proper.
\end{prop}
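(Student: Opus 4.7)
My plan is to follow the classical Furstenberg strategy for i.i.d.\ products (cf.\ \cite{Bougerol}), adapted to the Markovian setting via the factorisation $\nu = \sum_i q_i\, \nu_i \otimes \delta_{\{\omega_0 = i\}}$ supplied by part \ref{it:first_coord} of \autoref{prop:limit.inv.meas}. Since the shift is full we have $q_i > 0$ for every $i$, so properness of $\nu$ is equivalent to the assertion that each conditional measure $\nu_i$ on $\proj{d-1}$ puts zero mass on every proper projective subspace.

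The first step is to extract from $\SPhi\nu = \nu$ a backward invariance relation playing the role of $M\nu = \nu$ in the i.i.d.\ case. Testing $\SPhi\nu = \nu$ against cylinder sets $A \times B$ and conditioning on $\sigma\omega$, the Markov property collapses the conditional law of $\omega_0$ given $(\omega_1,\omega_2,\ldots)$ to a function of $\omega_1$ alone, yielding
\[
\nu_j \;=\; \sum_{i=1}^k \tilde{p}_{ij}\, M_i \nu_i, \qquad \tilde{p}_{ij} := \mP(\omega_0 = i \mid \omega_1 = j).
\]
Fullness of the Markovian shift forces $\tilde{p}_{ij}>0$ for all $i,j$, and this strict positivity is the hinge of everything that follows.

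Now suppose for contradiction that some $\nu_i$ charges a proper subspace, and let $r \in \{1,\ldots,d-1\}$ be the minimum dimension of such a subspace achievable across all $i$. On the Grassmannian $L_r$ of $r$-dimensional subspaces, $V\mapsto\nu_i(\overline{V})$ is upper semicontinuous, so $\alpha_i := \max_{V\in L_r}\nu_i(\overline{V})$ is attained; put $\alpha := \max_i \alpha_i > 0$ and $\W_i := \{V \in L_r : \nu_i(\overline{V}) = \alpha_i\}$. Minimality of $r$ makes $\nu_i$ vanish on every subspace of dimension less than $r$, in particular on intersections of distinct elements of $L_r$, so each $\W_i$ is finite. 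Substituting any $V \in \W_j$ with $\alpha_j=\alpha$ into the invariance relation yields
\[
\alpha \;=\; \nu_j(\overline{V}) \;=\; \sum_i \tilde{p}_{ij}\, \nu_i(\overline{M_i^{-1}V}) \;\le\; \sum_i \tilde{p}_{ij}\,\alpha \;=\; \alpha,
\]
so equality holds term-by-term, giving $\alpha_i=\alpha$ and $M_i^{-1}V \in \W_i$ for every $i$. Hence $M_i^{-1}\W_j \subseteq \W_i$ for all $i,j$; invertibility of $M_i$ and finiteness of the $\W_\bullet$ upgrade this to $M_i\W_i = \W_j$, forcing all $\W_j$ to coincide with a single finite set $\W^*$ stabilised setwise by each $M_i$, hence by all of $\cS_\varphi$. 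The finite union $\bigcup_{W \in \W^*} W$ is then a nontrivial $\cS_\varphi$-invariant family of proper subspaces of $\mathbb{R}^d$, contradicting strong irreducibility.

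I expect the subtlest step to be the clean disintegration that produces the backward invariance relation: this is precisely where the Markov hypothesis (already built into part \ref{it:first_coord} of \autoref{prop:limit.inv.meas}) enters, and it replaces the tidy $M\nu = \nu$ identity of the i.i.d.\ setting with a finite weighted-sum equation that must then be exploited on extremal subspaces. Once that identity is in hand, the remainder is a direct adaptation of the classical extremal-subspace chase.
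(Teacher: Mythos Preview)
Your argument is correct and follows the same Furstenberg extremal--subspace strategy as the paper, but you organise it differently. The paper keeps the full disintegration $\nu_\omega$ in play, uses the abstract conditional invariance relation \eqref{E:invar_factor} to show that the set $\HH=\{(\omega,W):\nu_\omega(\ovW)=r_*\}$ is $\SPhi$-invariant, and only at the end invokes the Markov structure (part \ref{it:first_coord} of \autoref{prop:limit.inv.meas}) to force the finite set $\HH_*$ to be nonempty and finite. You instead invoke the Markov reduction at the outset, collapsing everything to the $k$ measures $\nu_1,\dots,\nu_k$ and the explicit recursion $\nu_j=\sum_i\tilde p_{ij}\,M_i\nu_i$; the extremal argument then becomes a purely finite combinatorial chase, with attainment of the maxima $\alpha_i$ coming cleanly from upper semicontinuity on the compact Grassmannian rather than from an $\ess\sup$ plus a counting bound. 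Your route is tidier and more elementary in this finite-state Markovian setting; the paper's version has the advantage of isolating exactly where the Markov hypothesis is needed within an otherwise general-RDS argument.
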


\begin{proof}	
Denote by $\Gamma(l)$ the set of $l$-dimensional linear subspaces of $X$.
Let $l_{\min}$ be the smallest such that $\nu(\ovW)>0$ for some $W\in \Gamma(l)$.
Suppose $l_{\min}< d$, and define
\begin{align*}
	r_\omega& := \sup\left\{\nu_\omega(\ovW): W\in\Glm \right\} \text{ for each }
	\omega \in \Omega^+,\\
	r_* &:= \ess\sup r_\omega,\\
	\HH &:= \left\{ (\omega, W) \, : \, \nu_\omega(\ovW) = r_* \right\} \subset \Omega^+ \times \Glm,\\
	l_\omega & :=\inf\{l\le d:\nu_\omega(\ovW)>0, \text{ for some }W\in\Gamma(l)\}.
\end{align*}
We write $\pi_1$ and $\pi_2$ for the projections of $\HH$ onto $\Omega^+$ and
$\Glm$ respectively. The skew product $\SPhi$ maps $\Omega^+\times \Glm$ to itself by
$\SPhi(\omega,W) = (\sigma\omega, M_{\omega_0}W)$.

Fix some $\omega \in \Omega$ with $l_\omega=l_{\min}$. 
There must be a sequence of $l_\omega$-dimensional linear subspaces $V_1,V_2,\dots$
such that $\nu_\omega(V_j) \ge r_\omega - 2^{-j}$. If $V_j\ne V_{j'}$ then $\dim(V_j\cap V_{j'})<l_\omega$, hence (by the minimality of $l_\omega$) $\nu_\omega(V_j\cap V_{j'})=0$. 

For almost every $\omega\in \Omega^+$, and any $W\in \Glm$,
by \cite[Theorem 1.4.5]{Arnold}
\begin{equation} \label{E:invar_factor}
	\nu_{\sigma\om} (\ovW) = \mE\left[ \nu_{\cdot} \bigl(\widehat{M}(\cdot)^{-1}(\ovW) \bigr) 
	\, \bigm| \, \sigma^{-1} \F^+ \right](\om).
\end{equation}
If $(\sigma\omega, W)\in \HH$ then we have
$$
	r_* = r_{\sigma\omega} = \mE\left[ \nu_{\cdot} \bigl(\widehat{M}(\cdot)^{-1}(\ovW) \bigr) 
	\, \bigm| \, \sigma^{-1} \F^+ \right](\om).
$$
Thus, if we associate to any $\sigma\omega\in \pi_1\HH$ a $W_{\sigma\omega}$ such that
$(\sigma\omega,W_{\sigma\omega})\in \HH$,
\begin{align*}
	r_*\mP\left\{ \sigma\omega\in \pi_1(\HH) \right\}
		&= \mE\left[ r_{\sigma\omega} \mathbf{1}\left\{ \sigma\omega\in \pi_1(\HH) \right\} \right]\\
		&= \mE\left[\nu_{\om} \left( \widehat{M}(\omega)^{-1}(\ovW_{\sigma\omega}) \right) 
		  \mathbf{1}\left\{ \sigma\omega\in \pi_1(\HH) \right\}\right].
\end{align*}
Since 
$$
  \nu_{\om} \left( \widehat{M}(\omega)^{-1}(\ovW_{\sigma\omega}) \right)
  	\mathbf{1}\left\{ \sigma\omega\in \pi_1(\HH) \right\}
  	\le r_* \mathbf{1}\left\{ \sigma\omega\in \pi_1(\HH) \right\},
$$
and they have the same expected value, they must be almost surely equal.

We conclude that $\HH$ is almost-surely (in the measure induced by $\nu$)
invariant under $\SPhi$, hence that $\HH_*:= \{W\in \Glm \, :\, \mP\{(\omega,W)\in \HH\}>0\}$ 
is $\mP$-almost surely invariant under
$\cS_\varphi$.

Suppose that for some $r>0$ there were infinitely many $W\in \Glm$ such that
${\mP\{\nu_\omega(\ovW) \ge r \}>0}$.
As $\mP$ is Markovian, Proposition \ref{prop:limit.inv.meas} \ref{it:first_coord}
implies that for fixed $W$, $\nu_\omega(\ovW)$ can take on only $k$ distinct values, with probabilities $q_1,\dots,q_k$. Hence $\mP\{\nu_\omega(\ovW) \ge r\}>0$ implies
${\mP\{\nu_\omega(\ovW) \ge r \}> \min q_i}$, so that $\nu(\ovW) \ge r_* \min q_i$.
Since $\nu(\ovW \cap \ovW')=0$ for $W\ne W'\in \Glm$, and since $\nu$ is a finite measure,
this contradicts the assumption that there are infinitely many. This immediately
shows that $\HH_*$ is finite. But $\HH_*$ also cannot be empty, as that would
imply, by definition of $r_*$
as the essential supremum of $r_\omega$, the existence of an infinite sequence of distinct $W$
with $\mP\{ \nu_\omega(\ovW)\ge r_*/2 \}>0$. Thus $\HH_*$
is finite, nonempty, and invariant under $\cS_\varphi$,
contradicting the strong irreducibility of $\cS_\varphi$, and so proving that
$l_{\min}=d$. 
\end{proof}

\begin{prop}\label{prop:limit.range}
Let $(\Omega^+,\mathcal{F}^+,\mathbb{P},\sigma)$ be a one-sided Markovian full shift over $k$ symbols, $\varphi$  a
strongly irreducible and spatially invertible LRDS 
determined by $\varphi(1,\omega)=M(\omega)$, 
and having index $r$. Assume $M(\omega)$ depends only on the first coordinate $\omega_0$. 

Then for $\mP$-almost all $\om$, there exists an $r$-dimensional subspace $V(\om)$ of $\mathbb{R}^d$ such that it is the range of any limit point of $\varphi(n,\sigma^{-n}\om)/\|\varphi(n,\sigma^{-n}\om)||$. 
For any non-zero $x\in\mathbb{R}^d$, ${\mP\{\om : x\perp V(\om)\}=0}$.
\end{prop}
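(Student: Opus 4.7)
The plan is to identify $V(\om)$ with the linear span of the support of the invariant measure $\tnu_\om$ of Proposition~\ref{prop:limit.inv.meas}, and then to read off the dimension and the perpendicularity statement from properness (Proposition~\ref{prop:proper.LRDS}).

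First I would take any subsequence of $\varphi(n,\sigma^{-n}\om)/\|\varphi(n,\sigma^{-n}\om)\|$ and, by compactness of the unit ball in $\R^{d\times d}$ combined with Proposition~\ref{prop:limit.inv.meas}\ref{it:first_coord} (which restricts $\nu_{\sigma^{-n}\om}$ to finitely many values $\nu_1,\ldots,\nu_k$), extract a further subsequence $n_k$ along which the normalised products converge to some $M$ and $\nu_{\sigma^{-n_k}\om}=\nu_j$ is constant. Each $\nu_j$ inherits properness from $\nu$ (since $q_j>0$), so $\nu_j(\overline{\ker M})=0$ and the projective pushforward $M\cdot\nu_j$ is a well-defined weak limit of $\mu_{n_k,\om}=\psi(n_k,\sigma^{-n_k}\om)\nu_j$. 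Combining this with Proposition~\ref{prop:limit.inv.meas}\ref{it:mu_converges} forces $\tnu_\om=M\cdot\nu_j$ and $\supp\tnu_\om\subseteq\overline{\operatorname{Range}(M)}$. If the linear span of $\supp\tnu_\om$ were a strictly smaller subspace $W\subsetneq\operatorname{Range}(M)$, then $\supp\nu_j\subseteq M^{-1}(W)\cup\overline{\ker M}$, a union of two proper subspaces each of zero $\nu_j$-mass, contradicting $\nu_j(\supp\nu_j)=1$. Thus $\operatorname{Range}(M)=\operatorname{span}(\supp\tnu_\om)=:V(\om)$, independent of the subsequence, so every limit point has the same range.

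Next, to pin down $\dim V(\om)=r$, I would note that the invariance $\tnu_{\sigma\om}=M(\om)\cdot\tnu_\om$ (which descends from Proposition~\ref{prop:limit.inv.meas}\ref{it:mu_converges}) combined with invertibility of $M(\om)$ gives $\dim V(\sigma\om)=\dim V(\om)$, so ergodicity of the full Markov shift makes this a.s.\ equal to some constant $r_*$. The closed normalised semigroup $K:=\overline{\{A/\|A\|:A\in\cS_\varphi\}}$ then contains a matrix of rank $r_*$, so $r_*\ge r$ (interpreting the index $r$ as the minimum rank attained in $K$). For the reverse inequality I would pick $(A_\ell)\subseteq\cS_\varphi$ with $A_\ell/\|A_\ell\|\to M_0$ of rank $r$, and use fullness of the shift together with the ergodic theorem to find, for a.e.\ $\om$, times $m_k\to\infty$ along which the $N_{\ell(k)}$ deepest past coordinates of $\sigma^{-m_k}\om$ agree with those of $A_{\ell(k)}$; then $\varphi(m_k,\sigma^{-m_k}\om)=B_k A_{\ell(k)}$, $B_k/\|B_k\|$ converges (on a subsequence) to some rank-$r_*$ matrix $N_\om$, and continuity of matrix multiplication yields the limit $N_\om M_0/\|N_\om M_0\|$ of rank at most $r$; using strong irreducibility to choose $M_0$ so that $\operatorname{Range}(M_0)\not\subseteq\ker N_\om$ guarantees the limit is nonzero, and its rank must then equal $r_*$, forcing $r_*\le r$.

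For the perpendicularity claim, I fix a non-zero $x\in\R^d$ and observe that the two-sided invariant measure $\tnu$ inherits properness from $\nu$ through Remark~\ref{rmk:inv.meas.corsp}; hence $\tnu_\om(\overline{x^\perp})=0$ for $\mP$-a.e.\ $\om$, so $\supp\tnu_\om\not\subseteq\overline{x^\perp}$ and therefore $V(\om)\not\subseteq x^\perp$ almost surely, which is $\mP\{\om:x\perp V(\om)\}=0$. The hardest part will be the inequality $r_*\le r$: it requires merging the semigroup-level index assumption with the pathwise random dynamics through the matching/recurrence argument above, while ruling out the degeneracy $N_\om M_0=0$ using strong irreducibility in a non-trivial way.
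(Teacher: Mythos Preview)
Your identification of $V(\om)$ with $\operatorname{span}(\supp\tnu_\om)$ and the argument that every limit point $M$ satisfies $\operatorname{Range}(M)=V(\om)$ are correct and essentially match the paper's reasoning (the paper packages the inclusion $\operatorname{Range}(M)\subseteq V(\om)$ slightly differently, but the content is the same). The perpendicularity argument is likewise the same as the paper's.

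The gap is in your upper bound $r_*\le r$. Your construction first fixes $M_0$ (via the sequence $A_\ell$), then builds recurrence times $m_k$ depending on the words of the $A_{\ell(k)}$, then extracts a subsequential limit $N_\om$ of $B_k/\|B_k\|$. At the end you write ``using strong irreducibility to choose $M_0$ so that $\operatorname{Range}(M_0)\not\subseteq\ker N_\om$''. But $M_0$ was already fixed before $N_\om$ was produced, and changing $M_0$ (say to $QM_0$ for some $Q\in\cS_\varphi$) changes the words you must recur to, hence the times $m_k$, hence $B_k$, hence $N_\om$ itself. The choice is circular, and strong irreducibility alone does not break it: for each fixed $N$ you can find $Q$ with $NQM_0\ne 0$, but you cannot fix $N$ independently of $Q$ in your setup.

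The paper sidesteps this by invoking Proposition~\ref{prop:limit.inv.meas}\ref{it:Q_converges} rather than only \ref{it:mu_converges}. Part~\ref{it:Q_converges} gives, for a.e.\ $\om$ and \emph{every} finite product $Q\in\cS_\varphi$ simultaneously, $\lim_n \varphi(n,\sigma^{-n}\om)\,Q\,\nu_j=\tnu_\om$. One then fixes a single limit point $R(\om)$ of $\varphi(n,\sigma^{-n}\om)/\|\varphi(n,\sigma^{-n}\om)\|$ and deduces $R(\om)Q\nu_j=\tnu_\om$ for all $Q$, so $R(\om)$ is decoupled from $Q$. Now send $Q$ along $S_j^{(n)}\to P$ of rank $r$; if $R(\om)QP=0$ for every $Q$ then $\operatorname{span}\{QPx\}$ is a nontrivial proper $\cS_\varphi$-invariant subspace, contradicting strong irreducibility. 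Hence some $Q$ gives $R(\om)QP\ne 0$, so $\supp\tnu_\om\subseteq\overline{\operatorname{im}(R(\om)QP)}$ and $\dim V(\om)\le r$. This is exactly the decoupling your pathwise recurrence argument lacks; replacing your last step by this use of \ref{it:Q_converges} closes the gap.
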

\begin{proof}
By \autoref{prop:limit.inv.meas} the invariant measure $\tnu$ for $\psi$ with two-sided time may be represented, for $\mP$-
almost every $\omega$, as
\[
	\lim_{n\to\infty}\varphi(n,\sigma^{-n}\om)Q_j\nu_j=\tnu_\om,\quad{1\le j\le k},
\]
where $\nu=\sum_{i=1}^k q_i\nu_i\otimes \delta_{\{\omega:\omega_0=i\}}$ represents an invariant measure for the NRDS $\psi|_{\mathbb{N}\times \op}$ over one-sided time induced by the LRDS $\varphi$, and
$Q_j\in \Lambda_j = \{S M_j\, : \, S\in \cS_\varphi \}$.

Fix $\om$, and let $R(\om)\not = 0$ be a limit point of $\varphi(n,\sigma^{-n}\om)/\|\varphi(n,\sigma^{-n}\om)\|$ of rank $r(\om)$. Then $\vn Q_j\nu_j$ converges to $R(\om)Q_j\nu_j$
along a subsequence. By \autoref{prop:proper.LRDS}, $\nu$ is proper, hence so is $\nu_j$ for each $1\le j\le k$. 
The kernel of $R(\om)Q_j$ is thus $\nu_j$-null for
$\mP$-almost-every $\omega$,
and thus $R(\om)Q_j\nu_j$ is well-defined.
 
Therefore, for each $1\le j\le k$, almost every $\omega$, and any $Q_j$,
\begin{equation}\label{eq:nuom}
R(\om)Q_j\nu_j=\tnu_\om.
\end{equation}
Since $\varphi$ has index $r$ there exists a sequence $S_j^{(n)}/\|S_j^{(n)}\|$ that converges to a rank $r$ matrix $P$ as $n\to\infty$, where $S_j^{(n)}\in\Lambda_j$. Note 
$QS_j^{(n)}\in\Lambda_j$ for $Q\in \cS_\varphi$. We have
\[
	R(\om)Q S_j^{(n)}\nu_j=\tnu_\om. 
\]
Now either $R(\om)Q P=0$ or $R(\om)Q P\nu_j=\tnu_\om$.

If $R(\om)Q P=0$ for all $Q \in \cS_\varphi$, 
define $L:=\operatorname{span}\{Q Px: Q \in\cS_\varphi, x\in\mathbb{R}^d\}$. 
Then $L$ is a nontrivial proper linear subspace of $\ker R(\om)$, 
and $Q(L)=L$ for each $Q\in\cS_\varphi$. This contradicts the strong irreducibility. 

Therefore $R(\om)Q P\nu_j=\tnu_\om$ for some $Q\in\cS_\varphi$. Thus $\supp\tnu_\om\subset\overline{\im(R(\om)Q P)}$. Let \[
V(\om)=\operatorname{span}\{x\in\mathbb{R}^d:\bar{x}\in\supp\tnu_\om\},
\]
then $\dim V(\om)\le r$, since the rank of $R(\om)Q_jP$ is not greater than $r=\operatorname{rank}(P)$.

On the other hand, if $Q=\operatorname{Id}$ in \eqref{eq:nuom}, we have $R(\om)M_j\nu_j=\tnu_\om$. Thus
\[
	1=\tnu_\om(V(\om))=\nu_j\{x:R(\om)M_jx\in V(\om)\}.
\]
Since $\nu_j$ is proper, the linear space $\{x:R(\om)M_jx\in V(\om)\}$
must be $\mathbb{R}^d$. Therefore\\${V(\om)\supset \im(R(\om)M_j)=\im(R(\om))}$, and $\dim V(\om)\ge r(\om)\ge r$.

We may conclude that $\dim V(\om)=r$, and so  $V(\om)=\im(R(\om))$. Finally, 
\begin{align*}
\mP\{\om :x\perp V(\om)\}&=\mP\{\supp\tnu_\om\subset x^{\perp}\}\\
&=\mP\{\tnu_\om(x^\perp)=1\} \\
&= \int \mathbf{1} \{\tnu_\om(x^\perp)=1\} \mP(\diff \om)\\
&\le \int \tnu_\omega(x^\perp)\mathbb{P}(\diff\omega)\\
&= \nu(x^\perp) \quad \text{ (since $\tnu$ is invariant)} \\
&=0 \quad \text{ (since $\nu$ is proper).}
\end{align*}
\end{proof}

\begin{thm}
\label{thm:Furstenberg23.LRDS}
Let $(\Omega^+,\mathcal{F}^+,\mathbb{P},\sigma)$ be a one-sided Markovian full shift over $k$ symbols, $\varphi$ be a strongly irreducible, and spatially invertible LRDS determined by $\varphi(1,\omega)=M(\omega)$, and assume $M(\omega)$ depends only on the first coordinate $\omega_0$. 
If $\mE[\log^+\|M(\omega)\|]<\infty,$
then for a sequence $(x_n)$ of vectors converging to a non-zero $x\in\mathbb{R}^d$ we have 
\[
	\gamma=\lim_{n\to\infty}\dfrac{1}{n}\log\|\varphi(n,\omega)x_n\|,\quad \mathbb{P}\text{-a.s.}
\]
If $\nu$ is the invariant measure for the NRDS $\psi$ induced by $\varphi$, then
\[
	\gamma=\int\log\|M(\omega)\bar{x}\|\nu(\diff\omega,\diff\bar{x}).
\]
Moreover, if  $\mE[\log^+\|M^{-1}(\omega)\|]<\infty$, then
\[
	\dfrac{1}{n}\mathbb{E}\left[\log\|\varphi(n,\omega)x\|\right]\to\gamma\quad (n\to\infty)
\]
uniformly on $\{x\in\mathbb{R}^d:\|x\|=1\}$.
\end{thm}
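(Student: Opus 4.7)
The plan is to handle the three conclusions in sequence, with the pointwise limit carrying most of the weight.

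First, I reduce the sequence statement to the constant case $x_n\equiv x\ne 0$: writing $x_n=x+\epsilon_n$ with $\epsilon_n\to 0$, the bound $\|\varphi(n,\om)(x_n-x)\|\le\|\varphi(n,\om)\|\,\|\epsilon_n\|$ combined with the constant-case conclusion $n^{-1}\log\|\varphi(n,\om)x\|\to\gamma$ (which puts $\|\varphi(n,\om)x\|$ on the same exponential scale as $\|\varphi(n,\om)\|$) forces $\|\varphi(n,\om)x_n\|/\|\varphi(n,\om)x\|\to 1$. For constant $x\ne 0$, the upper bound $\limsup n^{-1}\log\|\varphi(n,\om)x\|\le\gamma$ is immediate from $\|\varphi(n,\om)x\|\le\|\varphi(n,\om)\|\,\|x\|$ and Theorem~\ref{thm:Furstenberg1.LRDS}. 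The matching lower bound is the heart of the argument. Using the $\mP$-invariance of $\sigma$, it suffices to show $\liminf_n n^{-1}\log\|\varphi(n,\sigma^{-n}\om)x\|\ge\gamma$ $\mP$-a.s. Along any subsequence on which the normalised backward products $\varphi(n,\sigma^{-n}\om)/\|\varphi(n,\sigma^{-n}\om)\|$ converge to a rank-$r$ matrix $R(\om)$ (guaranteed by Proposition~\ref{prop:limit.range}), whenever $R(\om)x\ne 0$ the ratio $\|\varphi(n,\sigma^{-n}\om)x\|/\|\varphi(n,\sigma^{-n}\om)\|$ tends to $\|R(\om)x\|>0$, and taking $n^{-1}\log$ delivers $\gamma$. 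To rule out $R(\om)x=0$, I would rerun the argument of Proposition~\ref{prop:limit.range} for the dual cocycle generated by the transposes $M(\om)^T$: strong irreducibility, spatial invertibility, and index $r$ all transfer to the transposes (strong irreducibility passes to orthogonal complements of any invariant family), and the range $V^T(\om)$ of the resulting limit matrix equals $(\ker R(\om))^\perp$, so that $\mP\{x\perp V^T(\om)\}=0$ is precisely $\mP\{x\in\ker R(\om)\}=0$.

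For the integral identity $\gamma=\int\log\|M(\om)\bar{x}\|\,\nu(\diff\om,\diff\bar{x})$, I apply Birkhoff's ergodic theorem to $(\SPhi,\nu)$ with observable $f(\om,\bar{x})=\log\|M(\om)\bar{x}\|$. Properness of $\nu$ (Proposition~\ref{prop:proper.LRDS}) keeps $M(\om)\bar{x}\ne 0$ almost surely and hence $f$ finite, while $\mE[\log^+\|M(\om)\|]<\infty$ controls the positive part. The telescoping identity
\[
	\sum_{k=0}^{n-1}f\circ\SPhi^k(\om,\bar{x})=\log\|\varphi(n,\om)x\|-\log\|x\|
\]
matches the Birkhoff average against $n^{-1}\log\|\varphi(n,\om)x\|$, whose almost-sure limit has just been identified as $\gamma$, pinning down $\int f\,d\nu=\gamma$.

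Finally, under $\mE[\log^+\|M^{-1}(\om)\|]<\infty$, the bracketing
\[
	-\log\|\varphi(n,\om)^{-1}\|\le\log\|\varphi(n,\om)x\|\le\log\|\varphi(n,\om)\|\qquad(\|x\|=1)
\]
combined with $L^1$-convergence of the two endpoints (Kingman's theorem applied to $\varphi$ and to the inverse cocycle) provides an integrable dominant, so dominated convergence transfers the a.s.\ limit to a pointwise-in-$x$ limit of expectations. Uniformity on the compact unit sphere then follows from equicontinuity of the family $\{x\mapsto n^{-1}\mE\log\|\varphi(n,\om)x\|\}$, whose modulus of continuity is controlled uniformly in $n$ by $\mE[\log^+\|M^{\pm1}\|]$ through a projective-Lipschitz estimate, combined with Arzelà--Ascoli and pointwise convergence to the constant $\gamma$. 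I expect the main obstacle to be the lower-bound step: carefully verifying that the transpose cocycle satisfies the hypotheses of Proposition~\ref{prop:limit.range} in the correct form and extracting the kernel statement via duality.
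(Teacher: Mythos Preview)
Your outline follows the Bougerol route that the paper simply cites, but the lower-bound step contains a genuine gap. The reduction ``by $\mP$-invariance of $\sigma$ it suffices to prove $\liminf n^{-1}\log\|\varphi(n,\sigma^{-n}\omega)x\|\ge\gamma$ a.s.'' is not valid: stationarity makes $\varphi(n,\omega)$ and $\varphi(n,\sigma^{-n}\omega)$ equal in law for each fixed $n$, but the two sequences are different processes (one is measurable with respect to the future of $\omega$, the other with respect to its past), and almost-sure $\liminf$ statements do not transfer between them. Even granting the reduction, the duality step misfires: $[\varphi(n,\sigma^{-n}\omega)]^T=M_{\omega_{-n}}^T\cdots M_{\omega_{-1}}^T$ grows on the left, so it is \emph{not} the backward product $M_{\omega_{-1}}^T\cdots M_{\omega_{-n}}^T$ of the transpose cocycle over $\sigma$, and Proposition~\ref{prop:limit.range} does not apply to it as written.

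The repair is to drop the reduction and stay with the forward product. For a limit point $R$ of $\varphi(n,\omega)/\|\varphi(n,\omega)\|$ you need $x\notin\ker R=(\operatorname{im}R^T)^\perp$; now $\varphi(n,\omega)^T=M_{\omega_0}^T\cdots M_{\omega_{n-1}}^T$ \emph{is} a right-growing product of the transposes, but over the \emph{time-reversed} Markov shift (transition matrix $\hat P_{ij}=P_{ji}q_j/q_i$). Since $\hat P$ has positive entries when $P$ does, and strong irreducibility and index pass to transposes, Proposition~\ref{prop:limit.range} applies to the reversed system and yields $\mP\{x\in\ker R\}=0$. This reversed-chain step is the ``careful verification'' you anticipated; it is precisely where the Markovian case diverges from the i.i.d.\ argument the paper invokes. (One smaller point: the equicontinuity route to uniform convergence in the third conclusion is shaky, since the projective Lipschitz bound on $x\mapsto\log\|\varphi(n,\omega)x\|$ grows with $n$; it is cleaner to combine the uniform integrability you already have with the sequence version of the first conclusion applied to an arbitrary $x_n\to x$.)
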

\begin{proof} 
With the results established above, the proof follows in the same way as in \cite{Bougerol} Chapter III. Corollary 3.4, with \autoref{prop:limit.inv.meas} replacing Chapter II. Lemma 2.1,\autoref{prop:proper.LRDS} replacing Chapter III. Proposition 2.3,  and \autoref{prop:limit.range} replacing Chapter III. Theorem 3.1 in \cite{Bougerol}.
\end{proof}

\begin{prop}\label{prop:contra.ult.LRDS} 
If $(\Omega^+,\mathcal{F}^+,\mathbb{P},\sigma)$ is a one-sided Markovian full shift of finite type, $\varphi$ is a strongly irreducible, contracting and spatially invertible LRDS determined by $\varphi(1,\omega)=M(\omega)$, where $M(\omega)$ depends only on the first coordinate $\omega_0$. Denote
\[\ell(M):=\sup\{\log^+\|M\|,\log^+\|M^{-1}\|\}.\]

Then there exists some $\alpha_0$ such that when $0<\alpha\le\alpha_0$,
\[
	\lim_{n\to\infty}\left\{\sup_{\substack{x,y\in\mathbb{R}P^{d-1}\\x\not= y}}\mathbb{E}\left[\dfrac{d^\alpha(\varphi(n,\omega)\cdot x, \varphi(n,\omega)\cdot y)}{d^\alpha(x,y)}\right]\right\}^{1/n}<1.
\]
\end{prop}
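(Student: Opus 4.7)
My plan is to adapt the Le Page contraction scheme (cf.\ Bougerol--Lacroix Chapter~III, Theorem~4.3) to the Markov setting, using the results of \Cref{sec:mkv} in place of their i.i.d.\ counterparts. Write $\delta_n(\bar x,\bar y,\om):=d(\varphi(n,\om)\cdot\bar x,\varphi(n,\om)\cdot\bar y)/d(\bar x,\bar y)$ and, for each $i\in\{1,\dots,k\}$,
\[
R_n(\alpha,i):=\sup_{\bar x\ne\bar y}\mE\bigl[\delta_n^\alpha(\bar x,\bar y,\om)\mid\om_0=i\bigr].
\]
The cocycle identity $\varphi(n+n_0,\om)=\varphi(n,\sigma^{n_0}\om)\circ\varphi(n_0,\om)$ gives
\[
\delta_{n+n_0}(\bar x,\bar y,\om)=\delta_n(\varphi(n_0,\om)\cdot\bar x,\varphi(n_0,\om)\cdot\bar y,\sigma^{n_0}\om)\cdot\delta_{n_0}(\bar x,\bar y,\om),
\]
and conditioning on $(\om_0,\dots,\om_{n_0})$ and invoking the Markov property (so that the conditional law of $\sigma^{n_0}\om$ depends only on $\om_{n_0}$) yields the sub-multiplicative bound $R_{n+n_0}(\alpha,i)\le R_{n_0}(\alpha,i)\,\max_j R_n(\alpha,j)$. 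Hence it suffices to find $n_0$ and $\alpha_0>0$ such that $\max_i R_{n_0}(\alpha,i)<1$ for every $\alpha\in(0,\alpha_0]$.

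The first key step is a uniform expected-log-contraction estimate. Using
\[
\log\delta_n=\log\frac{\|(\wedge^2\varphi(n,\om))(x\wedge y)\|}{\|x\wedge y\|}-\log\frac{\|\varphi(n,\om)x\|}{\|x\|}-\log\frac{\|\varphi(n,\om)y\|}{\|y\|},
\]
I would apply \autoref{thm:Furstenberg23.LRDS} to $\varphi$ (yielding the top Lyapunov exponent $\gamma_1$) and to its exterior square $\wedge^2\varphi$ (yielding $\gamma_1+\gamma_2$, with $\gamma_1>\gamma_2$ by Furstenberg's simplicity criterion, which in turn follows from strongly irreducible plus contracting). This gives $n^{-1}\mE[\log\delta_n\mid\om_0=i]\to\gamma_2-\gamma_1<0$, uniformly in $\bar x\ne\bar y$ and in the initial symbol $i$; the uniformity in $i$ is inherited because Markovianity plus fullness makes the conditional laws absolutely continuous with uniformly bounded densities. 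Fix $n_0$ so large that $\mE[\log\delta_{n_0}\mid\om_0=i]\le -c\,n_0$ for some $c>0$ and every $i$.

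The second key step is passing from the log-contraction to an exponential-moment bound. Applying the elementary inequality $e^u\le 1+u+\tfrac12 u^2 e^{|u|}$ to $u=\alpha\log\delta_{n_0}$, and bounding $|\log\delta_{n_0}|\le 2\sum_{j=0}^{n_0-1}\ell(M(\sigma^j\om))$, which is uniformly bounded because the matrix alphabet is finite, gives $R_{n_0}(\alpha,i)\le 1-\alpha c\,n_0+O(\alpha^2)$ as $\alpha\to0$. Shrinking $\alpha_0$ then makes $\max_i R_{n_0}(\alpha,i)<1$, and the sub-multiplicativity from the first paragraph yields geometric decay of $\max_i R_n(\alpha,i)$, which implies the required asymptotic. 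I expect the main obstacle to be verifying that \autoref{thm:Furstenberg23.LRDS} applies to $\wedge^2\varphi$: one must check that $\wedge^2\varphi$ is strongly irreducible on $\wedge^2\R^d$, a fact which in the i.i.d.\ case (Bougerol--Lacroix III.6.3--6.4) depends only on the semigroup $\cS_\varphi$, and so should transfer unchanged to the Markov case, but this point will need to be written out with care.
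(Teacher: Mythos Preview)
Your approach is essentially the one the paper intends: the paper's proof is a one-line reference to \cite[Proposition~V.2.3]{Bougerol}, replacing their Corollary~III.3.4 by \autoref{thm:Furstenberg23.LRDS}, and the Le~Page scheme you outline is exactly the content of that proposition. Your handling of the Markov dependence via the conditioned quantities $R_n(\alpha,i)$ and the sub-multiplicativity $R_{n+n_0}(\alpha,i)\le R_{n_0}(\alpha,i)\max_j R_n(\alpha,j)$ is a correct and natural way to make the i.i.d.\ argument go through, and is more explicit than the paper itself.

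One point deserves correction, however: the ``main obstacle'' you flag --- verifying that $\wedge^2\varphi$ is strongly irreducible so that \autoref{thm:Furstenberg23.LRDS} applies to it --- is not actually needed. In the decomposition
\[
\log\delta_n = \log\frac{\|(\wedge^2\varphi(n,\om))(x\wedge y)\|}{\|x\wedge y\|} - \log\|\varphi(n,\om)\bar x\| - \log\|\varphi(n,\om)\bar y\|,
\]
the exterior-square term only needs an \emph{upper} bound, and the trivial estimate $\|(\wedge^2\varphi)(u)\|\le\|\wedge^2\varphi\|\cdot\|u\|$ together with subadditivity gives $n^{-1}\mE[\log\|\wedge^2\varphi(n,\om)\|]\to\gamma_1+\gamma_2$ without any irreducibility hypothesis on $\wedge^2\varphi$. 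The uniform \emph{lower} bound is needed only for the two terms $\log\|\varphi(n,\om)\bar x\|$, and there \autoref{thm:Furstenberg23.LRDS} applies directly to $\varphi$ under the stated hypotheses. What you do still need is $\gamma_1>\gamma_2$; this is the simplicity result that, in Bougerol--Lacroix, depends only on the semigroup $\cS_\varphi$ being strongly irreducible and contracting, and the paper notes (in the remark following the proposition) that this carries over to the Markovian setting.
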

\begin{proof}
The proof follows as in \cite[Proposition V.2.3]{Bougerol}, with \autoref{thm:Furstenberg23.LRDS} replacing their Corollary III.3.4.
\end{proof}

\begin{rmk}
In \cite[Section IV.1]{Bougerol}, it is proved that in the i.i.d. case, under the finite expectation condition, if the matrix set is $p$-strongly irreducible and $p$-contracting (i.e., the matrix set formed by $p$-th exterior power of each matrix is strongly irreducible and contracting resp.), then the $p$-th Lyapunov exponent $\gamma_p$ is strictly greater than the $(p+1)$-th Lyapunov exponent $\gamma_{p+1}$ provided that $\gamma_p\not=-\infty$ ($p\le d-1$). This can be generalised to the Markovian product case with the same proof as that in \cite{Bougerol} and the help of the results in this section.
\end{rmk}

\section{Transfer Operators}
\label{sec:to}
Let $\{M_i\}_{1\le i\le k}$ be a finite set of invertible matrices satisfying strong irreducibility and the contracting property. In this section, we will find a proper function space on which we can define transfer operators and prove the corresponding spectral properties. 

\subsection{Function Space $\gta$}
\label{sec:gta}
Let $\K=\{1,2,\dots, k\}$ and define the one-sided shift space $\op=\K^{\mathbb{N}}$ over $k$ symbols.  Let $0<\theta< 1$, define the metric $\dt$ on $\op$ by $\dt(\om,\tom)=\theta^N$ where $N$ is the largest integer such that $\om_i=\tom_i, 0\le i<N.$ 
Denote the real projective space $\mathbb{R}P^{d-1} \ (d\ge 2)$ by $\X$. Define the metric $d_\X$ on the real projective space by $\dx(x,y)=(1-\langle x,y\rangle^2)^{1/2}$, where $x,y\in\mathbb{R}P^{d-1}$ are two unit vectors.

Let $0<\theta<1,\alpha>0$. Define $\gta$ to be a space 
whose elements are  functions $f:\op \times \X\to \mathbb{C}$ such that for any $f\in \gta$, 
\begin{enumerate}[label=(\alph*)]
\item uniformly for each $x\in \X$, $f(\om,x)$ is Lipschitz with respect to $\om$ under the metric $\dt$; 
\item uniformly for each $\om\in\op$, $f(\om,x)$ is $\alpha$-H\"{o}lder with respect to $x$ under the metric $d_\X$ on $\X$. 
\end{enumerate}

In other words, (a) there exists a finite constant $C_1>0$, not depending on $x$, such that  for each $x\in \X$, we have 
\[|f(\om,x)-f(\tom,x)|\le C_1 \dt(\om,\tom);\]
(b) there exists a finite constant $C_2>0$, not depending on $\om$, such that  for each $\om\in\op$, we have 
\[|f(\om,x_1)-f(\om,x_2)|\le C_2 \dx^\alpha(x_1,x_2).\] 

\vspace{3mm}
Now for any $(\om,x_1),(\tom, x_2)\in\opx$, if $f\in \gta$ we have
\begin{align*}|f(\om,x_1)-f(\tom,x_2)|&\le|f(\tom,x_2)-f(\om,x_2)|+|f(\om,x_1)-f(\om,x_2)|\\
&\le C_1 \dt(\om,\tom)+C_2 d_\X^\alpha(x_1,x_2)\\
&\le\max\{C_1,C_2\}(d_\X^\alpha(x_1,x_2)+\dt(\om,\tom)).\end{align*}
Thus we can define 
\[
|f|_\ta =\sup_{\substack{\om\not =\tom\in\op\\\text{or}\\x_1\not =x_2\in \X}}\dfrac{|f(\om,x_1)-f(\tom,x_2)|}{d_\X^\alpha(x_1,x_2)+\dt(\om,\tom)}.
\]                  
Conversely, any function $f$ on $\opx$ such that $|f|_\ta <\infty$ obviously lies in $\gta.$

\begin{lemma}\label{ineq1} 
	Given $M\in \mat$ define 
	\[\ell(M):=\sup\{\log^+\|M\|,\log^+\|M^{-1}\|\}.\]
	Then 
	\[d_\X(M\cdot x_1,M\cdot x_2)\le e^{4\ell(M)}d_\X(x_1,x_2),\]
	for any $x_1,x_2\in \mathbb{R}P^{d-1}$.
\end{lemma}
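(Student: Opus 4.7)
The plan is to work with explicit unit-vector representatives and to convert the projective metric into a wedge-product expression, from which the desired bound will follow by combining the standard estimates $\|\wedge^2 M\|_{\text{op}} \le \|M\|^2$ and $\|Mv\| \ge \|M^{-1}\|^{-1}$ for unit $v$.

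First I would fix unit vectors $x_1, x_2 \in \R^d$ representing the classes in $\X = \mathbb{R}P^{d-1}$ and observe that by Lagrange's identity,
\[
 d_\X(x_1,x_2)^2 \;=\; 1 - \langle x_1, x_2 \rangle^2 \;=\; \|x_1\|^2 \|x_2\|^2 - \langle x_1,x_2\rangle^2 \;=\; \|x_1 \wedge x_2\|^2,
\]
where $\|\cdot\|$ on $\wedge^2 \R^d$ is the norm induced by the standard inner product. Since $M \cdot x_i = M x_i / \|M x_i\|$, applying the same identity to the images gives
\[
 d_\X(M \cdot x_1,\, M \cdot x_2)^2 \;=\; \frac{\|M x_1 \wedge M x_2\|^2}{\|Mx_1\|^2 \, \|Mx_2\|^2}.
\]

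Next I would estimate numerator and denominator separately. For the numerator I use that $M x_1 \wedge M x_2 = (\wedge^2 M)(x_1 \wedge x_2)$, together with the elementary inequality $\|\wedge^2 M\|_{\text{op}} \le \|M\|^2$, giving
\[
 \|M x_1 \wedge M x_2\| \;\le\; \|M\|^2 \, \|x_1 \wedge x_2\| \;=\; \|M\|^2 \, d_\X(x_1,x_2).
\]
For the denominator, since $x_i$ is a unit vector and $\|x_i\| = \|M^{-1}(Mx_i)\| \le \|M^{-1}\| \cdot \|M x_i\|$, I obtain $\|M x_i\| \ge \|M^{-1}\|^{-1}$, so $\|Mx_1\|\|Mx_2\| \ge \|M^{-1}\|^{-2}$.

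Combining these gives $d_\X(M\cdot x_1, M\cdot x_2) \le \|M\|^2 \|M^{-1}\|^2 \, d_\X(x_1, x_2)$. Finally, since $\|M\|\|M^{-1}\| \ge 1$, the maximum of $\|M\|$ and $\|M^{-1}\|$ is at least $1$, hence $\log \|M\| \le \log^+\|M\| \le \ell(M)$ and likewise $\log\|M^{-1}\| \le \ell(M)$, so $\|M\|^2\|M^{-1}\|^2 \le e^{2\ell(M)} \cdot e^{2\ell(M)} = e^{4\ell(M)}$. I do not anticipate any significant obstacle here; the only mildly subtle ingredient is recognising $d_\X$ as a wedge-product norm via Lagrange's identity, but after that every step is a textbook estimate.
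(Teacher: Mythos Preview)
Your proof is correct and follows essentially the same route as the paper: both rewrite $d_\X$ via the wedge product and bound the ratio $\frac{\|Mx_1\wedge Mx_2\|}{\|Mx_1\|\|Mx_2\|}\cdot\frac{\|x_1\|\|x_2\|}{\|x_1\wedge x_2\|}$. The only difference is cosmetic---the paper cites a lemma from Bougerol giving $|\log\|\wedge^p M u\||\le p\,\ell(M)\|u\|$, whereas you derive the needed special cases ($\|\wedge^2 M\|\le\|M\|^2$ and $\|Mx_i\|\ge\|M^{-1}\|^{-1}$) directly.
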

\begin{proof} By Lemma III.5.4 in \cite{Bougerol}, we have
\[\left|\log\|\wedge^pM\|\right|\le p \ \ell(M);\]
\begin{equation}\label{eq:bnd.log.Mx}
\left|\log\|\wedge^pM u\|\right|\le p \ \ell(M)\|u\|,
\end{equation}
for any $u\in\wedge^p\mathbb{R}^d$, where $1\le p<d$ is an integer. Therefore
\[\dfrac{d_\X(M\cdot x_1,M\cdot x_2)}{d_\X(x_1,x_2)}=\dfrac{\|Mx_1\wedge Mx_2\|}{\|Mx_1\|\|Mx_2\|}\dfrac{\|x_1\|\|x_2\|}{\|x_1\wedge x_2\|}\le e^{4\ell(M)}.\]
\end{proof}

\begin{prop}\label{L:banach} 
	$(\gta,\|\cdot\|_\ta )$ is a Banach space, where $\|\cdot\|_\ta =|\cdot|_\infty+|\cdot|_\ta $. 
\end{prop}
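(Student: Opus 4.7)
The plan is to verify the two pieces of the claim: that $\|\cdot\|_\ta$ is a norm on $\gta$, and that the resulting normed space is complete. Both are routine once the definitions are unpacked, so the obstacle is essentially bookkeeping rather than ideas; the only substantive verification is that the limit of a Cauchy sequence lies back in $\gta$, which is handled by a standard ``Fatou-type'' argument on the seminorm $|\cdot|_\ta$.

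First I would check that $\gta$ is a vector space and that $\|\cdot\|_\ta$ satisfies the norm axioms. The seminorm $|\cdot|_\ta$ is positively homogeneous and subadditive directly from its definition as a supremum of ratios, and $|\cdot|_\infty$ is a norm in its own right, so the sum $\|\cdot\|_\ta = |\cdot|_\infty + |\cdot|_\ta$ is a norm whose vanishing already follows from the sup-norm piece. Closure under addition and scalar multiplication in $\gta$ follows from subadditivity of $|\cdot|_\ta$ and the fact that if $|f|_\ta<\infty$ then $|\lambda f|_\ta = |\lambda|\, |f|_\ta <\infty$.

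Next, for completeness, let $(f_n)\subset \gta$ be Cauchy in $\|\cdot\|_\ta$. Since $|\cdot|_\infty\le \|\cdot\|_\ta$, the sequence is uniformly Cauchy on $\opx$, hence converges uniformly to some bounded continuous function $f$. The key step is to show $f\in \gta$ and $\|f_n-f\|_\ta\to 0$. Boundedness of $(f_n)$ in $\|\cdot\|_\ta$ gives some $M<\infty$ with $|f_n|_\ta\le M$ for all $n$, so for each pair $(\om,x_1)\ne(\tom,x_2)$,
\[
   |f_n(\om,x_1)-f_n(\tom,x_2)| \le M\bigl(\dx^\alpha(x_1,x_2)+\dt(\om,\tom)\bigr).
\]
Passing to the limit $n\to\infty$ using pointwise (in fact uniform) convergence yields the same inequality with $f$ in place of $f_n$, so $|f|_\ta\le M$ and $f\in\gta$.

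Finally, to conclude $\|f_n-f\|_\ta\to 0$, fix $\varepsilon>0$ and choose $N$ with $\|f_n-f_m\|_\ta<\varepsilon$ for all $m,n\ge N$. Then for $m,n\ge N$ and any distinct $(\om,x_1),(\tom,x_2)$,
\[
   |(f_n-f_m)(\om,x_1)-(f_n-f_m)(\tom,x_2)| \le \varepsilon\bigl(\dx^\alpha(x_1,x_2)+\dt(\om,\tom)\bigr).
\]
Letting $m\to\infty$ and using pointwise convergence of $f_m$ to $f$ gives the same bound with $f$ in place of $f_m$, so $|f_n-f|_\ta\le\varepsilon$ for $n\ge N$. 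Combined with the already-established uniform convergence $|f_n-f|_\infty\to 0$, this gives $\|f_n-f\|_\ta\to 0$, completing the proof. No step poses a genuine obstacle; the only point requiring care is that the bound on $|f_n|_\ta$ transfers to $|f|_\ta$ via pointwise limits rather than any delicate interchange of limits.
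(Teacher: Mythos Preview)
Your proof is correct and follows essentially the same approach as the paper: obtain a uniform limit $f$, then pass the Cauchy estimate $|f_n-f_m|_\ta<\varepsilon$ through the pointwise limit $m\to\infty$ to conclude $|f_n-f|_\ta\le\varepsilon$. The one minor difference is that the paper invokes Arzel\`a--Ascoli (via compactness of $\opx$) to extract a uniformly convergent subsequence, whereas you more directly observe that Cauchy in $\|\cdot\|_\ta$ implies Cauchy in $|\cdot|_\infty$, which already gives uniform convergence of the full sequence without any compactness argument.
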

\begin{proof}
Since a sum of norms is a norm, we need only to show completeness.

Let $\{f_n\}_{n\ge 1}$ be a Cauchy sequence with respect to $\|\cdot\|_\ta $. Then $\{\|f_n\|_\ta:n\ge 1\}$ is bounded. Consequently $\{f_n\}_{n\ge 1}$ is uniformly bounded (since it is bounded under the sup-norm) and equicontinuous (since it is bounded under $|\cdot|_\ta$). As $\opx$ is a compact space, by the Arzel\`{a}--Ascoli Theorem, $\{f_n\}_{n\ge 1}$ has a limit point $f$ in the $|\cdot|_\infty$-topology. We want to show that $f_n$ converges to the same limit in the $|\cdot|_\ta $-topology.

For any $\epsilon>0$, there exists $N>0$, such that when $m,n>N$, $\|f_m-f_n\|_\ta<\epsilon$. So $|f_m-f_n|_\ta<\epsilon$. Fixing two points $(\om,x)$ and $(\tom, y)$,
\begin{align*}
\bigl|(f_m(\om,x)- f_n(\om,x))-(f_m(\tom,y)-f_n(\tom,y) \bigr|
 & \le |f_m-f_n|_\ta \dist\\
&<\epsilon\dist.
\end{align*}
As $f_n$ converges to $f$ uniformly through a subsequence, we have
\[
  \bigl|(f_m(\om,x)- f(\om,x))-(f_m(\tom,y)-f(\tom,y) \bigr|
    \le\epsilon\dist.
\]
So $|f_m-f|_\ta\le\epsilon$ for $m\ge N$. This completes the proof.
\end{proof}

\subsection{Markovian Transfer Operators on $\gta$}
\label{sec:mkv.to}
Given a random matrix $\SM(\omega)$ on the shift space $(\Omega^+,\mathcal{F}^+,\mP)$ over $k$ symbols, we write $p(\omega^{(1)}, \omega^{(0)})$ for the conditional
probability $\mP(\omega = \omega^{(0)} \, | \, \sigma\om = \omega^{(1)} )$,
where $\omega^{(0)}, \omega^{(1)} \in \Omega^+$.
When $\mP$ is Markovian this probability is induced by the $k\times k$ positive stochastic matrix $P$ given by 
\[
	P_{ij}=\mP(\omega_0=i \, | \, \omega_1=j),\quad 1\le i,j\le k, 
\]
and the initial probability vector $\mathbf{p}_0$. Now we want to define a family of parametrised transfer operators $\cL_t$ on $\gta$ by
\begin{equation}\label{eq:defn.L.mkv}
(\cL_tw)(\omega,x)=\sum_\oso p(\om,\om')e^{t\log\|\SM(\om')x\|}w\left(\om', \SM(\om')\cdot x\right).
\end{equation}
If \eqref{eq:defn.L.mkv} yields a well-defined operator on $\gta$ then
the $n$-th power of $\cL_t$ is
\begin{equation}
(\cL_t^nw)(\om,x)=\sum_{\nso} P(n,\omg{n})e^{t\log\|\psi(n,\omg{n})x\|}w(\omg{n}, \psi(n,\omg{n})\cdot x),
\end{equation}
where for any given $\omg{n}\in\op, n\ge 1$, we write
\begin{align}
\omg{k} & :=\sigma^{n-k}\omg{n},\quad \text{for } 1\le k\le n,\quad \om'=\omg{1}, \, \omega=\omg{0}; \notag\\
\label{eq:P.n} P(n,\omg{0}) & := \mP(\omega = \omg{0} \, | \, \sigma^n\om = \omg{n} ) \\
&= p(\sigma^n\omg{0},\sigma^{n-1}\omg{0})p(\sigma^{n-1}\omg{0},\sigma^{n-2}\omg{0}) \cdots p(\sigma\omg{0},\omg{0}); \notag\\
\label{eq:psi.n}\psi(n,\om) & :=\SM(\om)\SM(\sigma\om)\cdots \SM(\sigma^{n-1}\om).
\end{align}

The goal of this section and the next is to prove that when $t$ is sufficiently small, $\cL_t$ possesses a maximal eigenvalue $\beta(t)$
that is real and simple, with the rest of the spectrum lying strictly inside the open ball ${\{z\in\mathbb{C}:|z|<\beta(t)\}}$. This 
means there is a decomposition ${\cL_t=\beta(t)Q(t)+R(t)}$, where $Q(t)$ is a one-dimensional projection and $R(t)$ has spectral radius smaller than $\beta(t)$. Since
\begin{equation}\label{eq:exp}
(\cL_t^n\mathbf{1})(\omega,x)=\mE[e^{t\log\|\psi(n,\omega^{(n)})x\|}],
\end{equation}
then following \cite[Theorem 5.2]{PP} we see that $\beta'(0)$ equals the Lyapunov exponent associated to this problem of Markovian random matrix products.

We begin by defining a general weighted Markovian transfer operator. 
Given a complex weight function $g$ on $\opx$, define for
each $w\in\gta$
\begin{equation}\label{eq:defn.L.g}
(\cL _g w)(\om,x)=\sum_{\sigma\om'=\om}p(\omega,\omega')e^{g(\omega', \SM(\om')\cdot x)}w(\om',\SM(\om')\cdot x).
\end{equation}
$\cL_g$ may be written as
\[
	(\cL_gw)(\om,x)=\sum_{(\om',x')\in \Phi^{-1}(\om,x)}p(\om,\om')e^{g(\om',x')}w(\om',x') ,
\]
where $\Phi$ is the skew product on $\opx$, which here takes the form
\begin{equation}\label{eq:Phi}
	\Phi(\om,x)=(\sigma\om, \SM(\om)^{-1}\cdot x).
\end{equation}
Note here $\SM(\omega)^{-1}=\varphi(1,\omega)=M(\omega)$ in \autoref{defn:skew.prod} with the normalised action. 
This transfer operator is similar to the form used in \cite{PP} to study symbolic dynamics.

Unless otherwise indicated, we will assume $g\in\gta$ with $g=u+\imag v$, where $u, v$ are real functions. If $\cL_u\mathbf{1}=1$ we say $\cL_g$ is \emph{normalized}. When $\SM(\omega)$ only depends on the first coordinate $\omega_0$, we also write $\SM_j=\SM(\omega)$ when $\omega_0=j$ with $1\le j\le k$.

\begin{lemma} \label{lem:normalized_sup_norm}
If $\cL_g$ is normalized then $|\cL_g \, w|_\infty\le |w|_\infty$ for any $w\in \gta$.
\end{lemma}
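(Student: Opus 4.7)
The plan is to bound $|(\cL_g w)(\om,x)|$ pointwise by $|w|_\infty$ using the triangle inequality together with the normalization hypothesis, and then take the supremum. The key observation is that writing $g = u + \imag v$ gives $|e^{g(\cdot)}| = e^{u(\cdot)}$, so complex phases are killed inside the modulus while the real-exponential weight survives.

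Concretely, I would start from the defining expression
\[
(\cL_g w)(\om,x)=\sum_{\sigma\om'=\om}p(\om,\om')\,e^{g(\om',\SM(\om')\cdot x)}\,w(\om',\SM(\om')\cdot x),
\]
apply the triangle inequality (noting that $p(\om,\om')\ge 0$ since it is a conditional probability), and pull $|w|_\infty$ outside the sum. What remains is
\[
\sum_{\sigma\om'=\om}p(\om,\om')\,e^{u(\om',\SM(\om')\cdot x)},
\]
which is precisely $(\cL_u \mathbf{1})(\om,x)$. The normalization assumption $\cL_u\mathbf{1}=1$ then finishes the pointwise bound, and taking the supremum over $(\om,x)\in\opx$ gives $|\cL_g w|_\infty\le |w|_\infty$.

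I do not expect a genuine obstacle here: the sum over preimages $\sigma\om'=\om$ is countable (in fact, at most $k$ terms, one for each possible value of $\om'_0$), so there are no convergence issues, and nothing more than the modulus inequality $|e^{u+\imag v}|=e^u$ together with nonnegativity of $p(\om,\om')$ is required. The only thing worth being careful about is making sure the hypothesis is used in the correct form, namely that $\cL_u\mathbf{1}\equiv 1$ as a function on $\opx$, which is exactly what the definition of ``normalized'' supplies.
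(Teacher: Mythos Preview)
Your proposal is correct and follows essentially the same argument as the paper: a pointwise triangle-inequality bound, nonnegativity of $p(\om,\om')$, pulling out $|w|_\infty$, and identifying the remaining sum as $(\cL_u\mathbf{1})(\om,x)=1$. If anything, you are slightly more careful than the paper, which writes $\cL_g\mathbf{1}$ in place of $\cL_u\mathbf{1}$ at the key step; your explicit observation that $|e^{g}|=e^{u}$ makes the use of the normalization hypothesis $\cL_u\mathbf{1}=1$ cleaner.
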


\begin{proof}
	For any $(\omega,x)$ we have
	\begin{align*}
		\left| \cL_g w(\omega,x)\right| &= 
		  \Bigl| \sum_{(\om',x')\in \Phi^{-1}(\om,x)} 
		   p(\om,\om')e^{g(\om',x')}w(\omega', x')\Bigr|\\
		  &\le \sum_{(\om',x')\in \Phi^{-1}(\om,x)} 
		   p(\om,\om')\bigl| e^{g(\om',x')} \bigr|
		   \cdot \bigl| w(\omega', x') \bigr|\\
		   &\le |w|_\infty \cL_g \mathbf{1} (\omega,x)\\
		   &= |w|_\infty \, .
	\end{align*}
\end{proof}

\begin{thm}\label{thm:Lg.bnd}
Let $\shft$ be a one-sided shift space over $k$ symbols, and \[\SM(\omega):\op\to\mat\] depends only on the first coordinate $\om_0$. Then $\cL_g$ defined by \eqref{eq:defn.L.g} is a bounded operator on $\gtan$.
\end{thm}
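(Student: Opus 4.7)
The plan is to estimate $\|\cL_g w\|_\ta = |\cL_g w|_\infty + |\cL_g w|_\ta$ by a constant multiple of $\|w\|_\ta$, treating the uniform part and the seminorm part separately. Because $\op$ is a full shift over $k$ symbols, every $\om$ has exactly $k$ preimages under $\sigma$, which I label by the prepended symbol $j\om := (j,\om_0,\om_1,\ldots)$ for $j = 1,\dots,k$; and since $\SM$ depends only on the zeroth coordinate, the term attached to $j\om$ in \eqref{eq:defn.L.g} carries the fixed matrix $\SM_j$, so there are only $k$ matrices to handle. The sup-norm bound is immediate: from $p(\om,j\om)\le 1$, $|e^{g(j\om,\SM_j\cdot x)}|\le e^{|g|_\infty}$, and $|w|\le|w|_\infty$, one has $|\cL_g w|_\infty \le k\,e^{|g|_\infty}|w|_\infty$.

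For the seminorm, fix $(\om_1, x_1),(\om_2, x_2)\in\opx$ and pair the preimages by the common index $j$. Writing $a_j^{(i)} = p(\om_i, j\om_i)$, $b_j^{(i)} = e^{g(j\om_i, \SM_j\cdot x_i)}$, and $c_j^{(i)} = w(j\om_i, \SM_j\cdot x_i)$, the elementary triple-product identity
\begin{equation*}
a_j^{(1)}b_j^{(1)}c_j^{(1)} - a_j^{(2)}b_j^{(2)}c_j^{(2)} = (a_j^{(1)}-a_j^{(2)})\,b_j^{(1)}c_j^{(1)} + a_j^{(2)}(b_j^{(1)}-b_j^{(2)})\,c_j^{(1)} + a_j^{(2)}b_j^{(2)}(c_j^{(1)}-c_j^{(2)})
\end{equation*}
reduces the estimate to three pieces. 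In the Markovian setting $p(\om, j\om) = P_{j,\om_0}$, so $|a_j^{(1)}-a_j^{(2)}|$ vanishes when $\om_{1,0}=\om_{2,0}$ and is otherwise bounded by $1 = \dt(\om_1,\om_2)$. The $b$-difference is controlled by $|e^{z_1}-e^{z_2}|\le e^{|g|_\infty}|z_1-z_2|$ together with $g\in\gta$, and the $c$-difference directly by $w\in\gta$; both contribute terms of the form $\dt(j\om_1,j\om_2) + d_\X^\alpha(\SM_j\cdot x_1,\SM_j\cdot x_2)$.

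The two geometric reductions that close the argument are that prepending contracts symbolic distance, $\dt(j\om_1,j\om_2)=\theta\,\dt(\om_1,\om_2)$, and that by \autoref{ineq1}, $d_\X^\alpha(\SM_j\cdot x_1,\SM_j\cdot x_2)\le e^{4\alpha\,\ell(\SM_j)}\,d_\X^\alpha(x_1,x_2)$, with $\max_j e^{4\alpha\,\ell(\SM_j)}$ finite because there are only $k$ matrices $\SM_1,\dots,\SM_k$. Collecting all terms and summing over $j$ gives
\begin{equation*}
|\cL_g w(\om_1,x_1)-\cL_g w(\om_2,x_2)|\le C\,\|w\|_\ta\bigl(\dt(\om_1,\om_2)+d_\X^\alpha(x_1,x_2)\bigr),
\end{equation*}
for a constant $C$ depending on $|g|_\infty$, $|g|_\ta$, $k$, $\alpha$, $\theta$, and $\max_j\ell(\SM_j)$; combined with the sup-norm bound this yields $\|\cL_g w\|_\ta\le C'\|w\|_\ta$. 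There is no deep obstacle: the argument is a structured accounting exercise, but care is needed in verifying that the $a$-difference contributes the correct factor of $\dt(\om_1,\om_2)$, which rests on the Markovian form of $p(\om,\om')$, and in using that $\SM$ takes only finitely many values to ensure the matrix-dependent constants are uniform.
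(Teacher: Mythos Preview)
Your proposal is correct and follows essentially the same route as the paper: both proofs bound the sup-norm trivially, then control the seminorm by expanding each summand as a triple product $p\cdot e^{g}\cdot w$, splitting the difference into three pieces, and closing with $\dt(j\om,j\tom)=\theta\,\dt(\om,\tom)$ together with \autoref{ineq1}. The only cosmetic differences are that the paper treats the $\om$- and $x$-variations separately rather than simultaneously, and that it uses $\sum_j p(\om,j\om)=1$ to sharpen the sup-norm bound to $|e^g|_\infty|w|_\infty$ instead of your $k\,e^{|g|_\infty}|w|_\infty$; your explicit remark that the $a$-difference estimate rests on the Markovian form of $p$ is in fact more careful than the paper's own presentation at this point.
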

\begin{proof}
We note first that if $g\in\gta$, then $e^g\in\gta$. 
This follows from
\[
	\left|e^{g(\omega,x)}-e^{g(\tom,y)}\right|
		\le e^{|g|_\infty}\bigl|g(\om,x)-g(\tom,y)\bigr|. 
\]
We also have $|\cL_g|_\infty\le|e^g|_\infty<\infty$ since
\begin{equation}\label{eq:Lg.unif.bnd}
|\cL_gw|_\infty\le\sum_\oso p(\om,\om')|e^g|_\infty|w|_\infty =|e^g|_\infty|w|_\infty.
\end{equation}

For any $\om, \tom\in\op$ and $x\in\X$, denote by $i\om$ the
sequence defined by $(i\omega)_0=i$ and $\sigma(i\omega)=\omega$. We have
\begin{align*}
\bigl|(\cL_gw)(&\om,x)-(\cL_gw)(\tom,x) \bigr|\\
\le & \sum_{i=1}^k \left|p(\om, i\om)e^{g(\om, \SM_i\cdot x)}w(i\om, \SM_i\cdot x)-p(\tom,i\tom)e^{g(i\tom, \SM_i\cdot x)}w(i\tom, \SM_i\cdot x)\right|\\
\le & \sum_{i=1}^k \left|p(\om, i\om)-p(\tom, i\tom)\right||e^g|_\infty |w|_\infty + \sum_{i=1}^k p(\tom, i\tom)\left|e^{g(i\om, \SM_i\cdot x)} - e^{g(i\tom, \SM_i\cdot x)}\right||w|_\infty  \\
&\qquad +\sum_{i=1}^k p(\tom, i\tom)|e^g|_\infty \left|w(i\om,\SM_i\cdot x)- w(i\tom, \SM_i\cdot x)\right|\\
\le & \left(2|e^g|_\infty|w|_\infty + \theta|e^g|_\ta|w|_\infty + |e^g|_\infty \ \theta |w|_\ta\right) \ \dt(\om, \tom);
\end{align*}
and for any $\om\in\op$ and $x, y\in \X$,
\begin{align*}
\bigl|(\cL_gw)(&\om,x)-(\cL_gw)(\om,y)\bigr| \\
\le& \sum_{\sigma\om'=\om} p(\om,\om') \left(|e^{g(\om', \SM(\om')\cdot x)}-e^{g(\om',  \SM(\om')\cdot y)}||w|_\infty +|w(\SM(\om')\cdot x)-w(\SM(\om')\cdot y)||e^g|_\infty\right)\\
\le&  \sum_{\sigma\om'=\om} p(\om,\om') \left( |e^g|_\ta |w|_\infty +|w|_\ta|e^g|_\infty\right)\dxa(\SM(\om')\cdot x,\SM(\om')\cdot y)\\
\le&  \sum_{\sigma\om'=\om} p(\om,\om') \left(|e^g|_\ta |w|_\infty+|e^g|_\infty |w|_\ta )\right) e^{4\alpha\ell(\SM(\om'))}\dxa(x,y)  \qquad \text{(by \autoref{ineq1})}\\
\le&  e^{4\alpha K}\left(|e^g|_\ta|w|_\infty+|e^g|_\infty|w|_\ta\right)\dxa(x,y),
\end{align*}
where $K:=\max_{1\le i\le k}\ell(\SM_i)<\infty$.
Thus
\begin{equation}\label{eq:Lg.equi}
|\cL_gw|_\ta\le \left[2|e^g|_\infty+ (\theta + e^{4\alpha K})|e^g|_\ta\right] |w|_\infty+ (\theta+e^{4\alpha K})|e^g|_\infty|w|_\ta.
\end{equation}
Therefore $\cL_g$ is a bounded operator on $\gtan$ by \eqref{eq:Lg.unif.bnd} and \eqref{eq:Lg.equi}.
\end{proof}

The parametrised Markovian transfer operators defined by \eqref{eq:defn.L.mkv} correspond to the general 
formula $\cL_g$, with the function
${g(\omega,x)=-t\log\|\SM(\om)^{-1}x\|}$. As
\[
	g(\om', \SM(\om')\cdot x)
	=-t\log\left\|\SM(\om')^{-1}\dfrac{\SM(\om')x}{\|\SM(\om')x\|}\right\|=t\log\|\SM(\om')x\|,
\]
we see that this $g$ is in $\gta$.

The following lemma in \cite{Bougerol} (see Chapter V Lemma 4.2) can be used to validate this choice of $g$.

\begin{lemma}\label{lm:log.bnd}  For $t\in\mathbb{C}$ and $0<\alpha\le 1$, there exists $c_1, c_2>0$ such that for any $M\in \mat$,
\begin{enumerate}[label=(\alph*)]
\item\label{lm.item:log} $\displaystyle\sup_{x\not=y\in \X} \dfrac{\left|\log\|Mx\|-\log\|My\|\right|}{d_\X^\alpha(x,y)}\le c_1 \ \ell(M) e^{2\alpha \ell(M)}.$
\item\label{lm.item:e.log} $\displaystyle\sup_{x\not=y\in \X} \dfrac{\left|e^{t\log\|Mx\|}-e^{t\log\|My\|}\right|}{d_\X^\alpha(x,y)}\le c_2 \ e^{[(1+\alpha)|\real t|+2\alpha]\ell(M)}.$
\end{enumerate}
\end{lemma}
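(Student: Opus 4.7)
The plan is to prove~(a) by interpolating a trivial uniform bound and a Lipschitz-in-$d_\X$ bound for the function $z\mapsto\log\|Mz\|$, and then to reduce~(b) to~(a) via the mean value theorem applied to $w\mapsto e^{tw}$.

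For~(a), the first ingredient is the observation that $|\log\|Mz\||\le\ell(M)$ for any unit $z\in\mathbb{R}^d$, which gives the uniform bound $|\log\|Mx\|-\log\|My\||\le 2\ell(M)$. For the Lipschitz-type bound I would parametrise a shortest arc-length geodesic $\gamma(s)$ on the unit sphere between representatives of $x,y$ chosen with $\langle x,y\rangle\ge 0$, so that the arc length is at most $\tfrac{\pi}{2}d_\X(x,y)$, and differentiate:
\[
\frac{d}{ds}\log\|M\gamma(s)\| \;=\; \frac{\langle M^{T}M\gamma(s),\gamma'(s)\rangle}{\|M\gamma(s)\|^{2}},
\]
whose absolute value is at most $\|M\|/\|M\gamma(s)\|\le\|M\|\|M^{-1}\|\le e^{2\ell(M)}$. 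Integrating yields $|\log\|Mx\|-\log\|My\||\le \tfrac{\pi}{2}e^{2\ell(M)}d_\X(x,y)$. The elementary interpolation inequality $\min(a,b)\le a^{1-\alpha}b^\alpha$ then combines these two bounds into $C_\alpha\ell(M)^{1-\alpha}e^{2\alpha\ell(M)}d_\X^{\alpha}(x,y)$.

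To sharpen the $\ell(M)^{1-\alpha}$ prefactor to $\ell(M)$, I would refine the Lipschitz estimate by writing $M^{T}M=e^{\Lambda}$ with symmetric $\Lambda$ of operator norm at most $2\ell(M)$. Using $\langle M^{T}M\gamma,\gamma'\rangle=\langle(e^{\Lambda}-I)\gamma,\gamma'\rangle$ (which holds since $\gamma'\perp\gamma$), the representation $e^{\Lambda}-I=\int_{0}^{1}\Lambda e^{s\Lambda}\,ds$ gives $\|(e^{\Lambda}-I)\gamma\|\le\|\Lambda\|e^{\|\Lambda\|}$, so the derivative bound becomes $C\ell(M)e^{c\ell(M)}$. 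Interpolating this refined Lipschitz bound with the uniform bound $2\ell(M)$ produces the asserted form $c_{1}\ell(M)e^{2\alpha\ell(M)}d_\X^{\alpha}(x,y)$.

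For~(b), I would apply the mean value theorem along the segment from $B=t\log\|My\|$ to $A=t\log\|Mx\|$ in $\mathbb{C}$, giving $|e^{A}-e^{B}|\le|A-B|\cdot\max(|e^{A}|,|e^{B}|)$. Since $|e^{A}|=\|Mx\|^{\real t}\le e^{|\real t|\ell(M)}$ and $|A-B|=|t|\,|\log\|Mx\|-\log\|My\||$, part~(a) yields a bound of the form $c\,|t|\,\ell(M)\,e^{(|\real t|+2\alpha)\ell(M)}\,d_\X^{\alpha}(x,y)$. To obtain the stated exponent $(1+\alpha)|\real t|+2\alpha$, I would interpolate this bound against the trivial bound $|e^{A}-e^{B}|\le 2e^{|\real t|\ell(M)}$ using the same inequality $\min(a,b)\le a^{1-\alpha}b^\alpha$; this absorbs the nuisance factor $|t|\,\ell(M)$ into an additional exponential factor $e^{\alpha|\real t|\ell(M)}$, up to a multiplicative constant depending on $t$ and $\alpha$. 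The main technical obstacle is the sharpening in~(a) to obtain the $\ell(M)$ prefactor rather than $\ell(M)^{1-\alpha}$, which hinges on exploiting the linear vanishing $e^{\Lambda}-I=O(\Lambda)$ for small $\Lambda$ instead of the crude estimate $\|M^{T}M\gamma\|\le\|M\|\|M\gamma\|$; after this refinement, the remainder is bookkeeping of constants through two interpolation steps.
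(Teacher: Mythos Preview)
The paper does not give its own proof of this lemma; it simply cites \cite[Chapter V, Lemma 4.2]{Bougerol}. So the comparison is only with your proposal's internal consistency.

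Your plan for~(a) is essentially correct. One caveat: your refinement via $e^{\Lambda}-I$ bounds the numerator $\langle(e^\Lambda-I)\gamma,\gamma'\rangle$ by $2\ell(M)e^{2\ell(M)}$, but you still must divide by $\|M\gamma\|^{2}\ge e^{-2\ell(M)}$, so the derivative bound comes out as $C\ell(M)e^{4\ell(M)}$, and interpolation then yields the exponent $4\alpha\ell(M)$ rather than $2\alpha\ell(M)$. This is harmless for every application in the paper, and can be repaired to give the stated constant by a simple case split: for $\ell(M)\ge 1$ the crude interpolation already gives $\ell(M)^{1-\alpha}e^{2\alpha\ell(M)}\le \ell(M)e^{2\alpha\ell(M)}$; for $\ell(M)<1$ your refined Lipschitz bound $C\ell(M)e^{4\ell(M)}d_\X\le Ce^{4}\ell(M)\,d_\X^{\alpha}$ suffices directly.

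Your plan for~(b) has a genuine gap. After invoking~(a) you obtain
\[
\bigl|e^{A}-e^{B}\bigr|\le c\,|t|\,\ell(M)\,e^{(|\real t|+2\alpha)\ell(M)}\,d_\X^{\alpha}(x,y),
\]
and you then propose to interpolate this against the trivial bound $2e^{|\real t|\ell(M)}$ using $\min(a,b)\le a^{1-\alpha}b^{\alpha}$. But that interpolation raises $d_\X^{\alpha}$ to $d_\X^{\alpha^{2}}$, not $d_\X^{\alpha}$, so it cannot deliver the stated estimate. More fundamentally, when $\real t=0$ --- precisely the case used in \autoref{thm:basic.ineq.Lt} --- the ``additional exponential factor'' $e^{\alpha|\real t|\ell(M)}$ is identically~$1$, so there is nothing to absorb the unbounded prefactor $\ell(M)$ into; the reduction of~(b) to~(a) simply fails here.

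The fix is to interpolate one step earlier, before passing to the $\alpha$-H\"older scale. Combine the Lipschitz bound
\[
\bigl|e^{A}-e^{B}\bigr|\le |t|\,e^{|\real t|\ell(M)}\bigl|\log\|Mx\|-\log\|My\|\bigr|\le C\,|t|\,e^{(|\real t|+2)\ell(M)}\,d_\X(x,y)
\]
(using only the crude Lipschitz estimate on $\log\|M\cdot\|$, not~(a)) with the trivial bound $2e^{|\real t|\ell(M)}$. Interpolating these with exponents $(1-\alpha,\alpha)$ gives $C'|t|^{\alpha}e^{(|\real t|+2\alpha)\ell(M)}d_\X^{\alpha}(x,y)$, which is even stronger than the asserted bound since $(1+\alpha)|\real t|+2\alpha\ge|\real t|+2\alpha$.
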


By \autoref{lm:log.bnd} \ref{lm.item:log}, for $\om\in\op$ and $x,y\in\X$,
\[
	|g(\om,x)-g(\om,y)|\le c_1|\real t|K e^{2\alpha K}\dxa(x,y).
\]
And for $\om,\tom\in\op,x\in\X$,
\[
	|g(\om,x)-g(\tom,x)|\le 2|\real t|K\dt(\om,\tom).
\]
These two inequalities show that $|g|_\ta<\infty$, yielding
the following corollary to \autoref{thm:Lg.bnd}.

\begin{cor}\label{cor:Lt.bnd}
Let $\shft$ be a one-sided shift space over $k$ symbols, and \[{\SM(\omega):\op\to\mat}\] depend only on the first coordinate $\om_0$. Then $\cL_t$ defined by \eqref{eq:defn.L.mkv} is a bounded operator on $\gtan$.
\end{cor}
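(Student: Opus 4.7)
The plan is to reduce this corollary directly to \autoref{thm:Lg.bnd} by showing that $\cL_t$ is precisely $\cL_g$ for a suitable $g$ that lies in $\gta$. Unpacking definitions, when we set $g(\omega,x)=-t\log\|\SM(\omega)^{-1}x\|$, the invariance of norm under positive scalars gives
\[
  g(\om',\SM(\om')\cdot x)=-t\log\left\|\SM(\om')^{-1}\frac{\SM(\om')x}{\|\SM(\om')x\|}\right\|=t\log\|\SM(\om')x\|,
\]
so the operator $\cL_g$ from \eqref{eq:defn.L.g} with this choice of $g$ matches $\cL_t$ from \eqref{eq:defn.L.mkv} term by term. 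Hence it suffices to check that this particular $g$ belongs to $\gta$, at which point \autoref{thm:Lg.bnd} delivers boundedness.

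To verify $g\in\gta$ I would estimate its two Hölder-type seminorms separately. For the spatial variable, \autoref{lm:log.bnd}\ref{lm.item:log} applied to $M=\SM(\om)^{-1}$ together with the uniform bound $K=\max_{1\le i\le k}\ell(\SM_i)<\infty$ (and $\ell(\SM^{-1})=\ell(\SM)$) yields
\[
  |g(\om,x)-g(\om,y)|\le c_1|\real t|\,K\,e^{2\alpha K}\,\dx^\alpha(x,y),
\]
uniformly in $\om$. For the temporal variable, the hypothesis that $\SM(\om)$ depends only on $\om_0$ means that if $\dt(\om,\tom)<1$ (i.e.\ $\om_0=\tom_0$) then $g(\om,x)=g(\tom,x)$, while the trivial bound $|g(\om,x)-g(\tom,x)|\le 2|\real t|K$ suffices when $\dt(\om,\tom)=1$. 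Either way,
\[
  |g(\om,x)-g(\tom,x)|\le 2|\real t|K\,\dt(\om,\tom).
\]
Combining these two estimates gives $|g|_\ta<\infty$, and clearly $|g|_\infty\le |\real t|K<\infty$, so $g\in\gta$.

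With $g\in\gta$ established, \autoref{thm:Lg.bnd} applies directly: $\cL_t=\cL_g$ is a bounded operator on $(\gta,\|\cdot\|_\ta)$. Since the excerpt already records both the algebraic identification $\cL_t=\cL_g$ and the two required inequalities, there is no real obstacle beyond assembling these ingredients into a single statement; the only point that requires genuine (though mild) care is tracking that $\ell(\SM(\om))$ is finite uniformly in $\om$, which is where the hypothesis that $\om\mapsto\SM(\om)$ takes only finitely many values (via dependence on $\om_0$ and the shift being over $k$ symbols) is essential.
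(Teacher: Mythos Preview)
Your proposal is correct and follows essentially the same approach as the paper: identify $\cL_t$ with $\cL_g$ for $g(\omega,x)=-t\log\|\SM(\omega)^{-1}x\|$, verify $g\in\gta$ via \autoref{lm:log.bnd}\ref{lm.item:log} for the spatial variable and the first-coordinate dependence of $\SM$ for the temporal variable, and then invoke \autoref{thm:Lg.bnd}. The two inequalities you state are exactly those the paper records in the discussion immediately preceding the corollary.
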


\begin{rmk}
Under the conditions of \autoref{thm:Lg.bnd} $\cL_g$ and $\cL_t$ can also be regarded as bounded operators acting on the space of bounded Borel functions on $\opx$ by \eqref{eq:Lg.unif.bnd}. 
\end{rmk}

\begin{thm}\label{thm:mkv.inv.meas}
Let $\cL_0$ be the parametrised transfer operator $\cL_t$ defined by \eqref{eq:defn.L.mkv} when $t=0$, let $\nu$ be a probability measure on $\opx$ whose marginal on $\Omega^+$ is $\mP$. Then the following are equivalent:
\begin{enumerate}[label=(\alph*)]
\item\label{thm.item:phi.inv} $\nu$ is invariant with respect to $\Phi$;
\item\label{thm.item:L.inv} For any $w\in\gta$, we have 
\begin{equation} \label{eq:L.inv}
\int \cL_0 w(\om,x) \ \nu(\diff\om, \diff x)=\int w(\om,x) \ \nu(\diff\om,\diff x)
\end{equation}
\end{enumerate}
\end{thm}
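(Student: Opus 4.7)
The approach rests on the algebraic identity
\[
\cL_0(w\circ\Phi)(\omega,x)=\sum_{\sigma\omega'=\omega}p(\omega,\omega')\,w\bigl(\sigma\omega',\SM(\omega')^{-1}\SM(\omega')\cdot x\bigr)=w(\omega,x),
\]
which holds for any $w$ on $\opx$: the cancellation $\SM(\omega')^{-1}\SM(\omega')=\mathrm{id}$ collapses every summand to $w(\omega,x)$, and $\sum_{\sigma\omega'=\omega}p(\omega,\omega')=1$. As a preparatory step I would verify that $w\circ\Phi\in\gta$ whenever $w\in\gta$: composition with $\sigma$ inflates the $\dt$-Lipschitz seminorm by at most $1/\theta$ on cylinders $\{\omega_0=\mathrm{const}\}$ (with inter-cylinder discontinuities bounded by $2|w|_\infty$), while composition with the projective action of $\SM(\omega)^{-1}$ preserves the $\alpha$-H\"{o}lder seminorm on $\X$ up to the uniform factor $e^{4\alpha K}$ from \autoref{ineq1}, where $K=\max_i\ell(\SM_i)<\infty$.

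For (b)$\Rightarrow$(a) the identity closes the argument in one step: applying (b) to $v:=w\circ\Phi\in\gta$ and using $\cL_0 v=w$ yields $\int w\,d\nu=\int w\circ\Phi\,d\nu$ for every $w\in\gta$. Because $\gta$ is dense in $C(\opx)$ by a Stone--Weierstrass argument on the compact Polish space $\opx$, this identity is measure-determining and forces $\Phi_*\nu=\nu$.

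For (a)$\Rightarrow$(b) I would proceed by a direct disintegration calculation. Writing $\nu(d\omega,dx)=\nu_\omega(dx)\mP(d\omega)$ and using the Markov conditional-expectation identity
\[
\int F(\omega')\,\mP(d\omega')=\int\sum_{\sigma\omega'=\omega}p(\omega,\omega')F(\omega')\,\mP(d\omega),
\]
which holds because $\sigma\mP=\mP$ and $p$ is the transition density, the sum-over-preimages in $\cL_0 w$ can be exchanged with the outer $\mP$-integral to give $\int\cL_0 w\,d\nu=\int\!\!\int w(\omega,\SM(\omega)\cdot x)\,\nu_{\sigma\omega}(dx)\,\mP(d\omega)$. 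A change of variable $y=\SM(\omega)\cdot x$ recasts the inner integral as one against $(\SM(\omega))_*\nu_{\sigma\omega}$; then $\Phi$-invariance combined with \autoref{prop:limit.inv.meas}(a), which under (a) pins $\nu_\omega$ to a finite convex combination indexed by $\omega_0$, identifies this pushforward with $\nu_\omega$ $\mP$-almost everywhere, yielding $\int\cL_0 w\,d\nu=\int w\,d\nu$.

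The hard step is this final identification in (a)$\Rightarrow$(b): $\Phi$-invariance immediately supplies only a weighted-sum identity on the family $\{\nu_\omega\}_\omega$, and converting this into the pointwise pushforward equality $\nu_\omega=(\SM(\omega))_*\nu_{\sigma\omega}$ demanded by the transfer-operator formulation relies crucially on the rigid Markovian factorisation provided by \autoref{prop:limit.inv.meas}(a).
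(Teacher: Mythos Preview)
Your direction (b)$\Rightarrow$(a) is correct and matches the paper's. You streamline it by isolating the algebraic identity $\cL_0(w\circ\Phi)=w$; the paper reaches the same conclusion by first deriving the disintegration formula
\[
\int\cL_0 w\,d\nu=\iint w(\omega,\SM_\omega x)\,\nu_{\sigma\omega}(dx)\,\mP(d\omega)
\]
and then specialising it to $w\circ\Phi$, but the underlying cancellation is identical.

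For (a)$\Rightarrow$(b) your route diverges from the paper's and carries a real gap. Having obtained the same disintegration formula, you try to close the argument by establishing the \emph{pointwise} pushforward identity $(\SM_\omega)_*\nu_{\sigma\omega}=\nu_\omega$ $\mP$-a.e., invoking $\Phi$-invariance together with \autoref{prop:limit.inv.meas}\ref{it:first_coord}. But disintegrating $\Phi_*\nu=\nu$ yields only the \emph{averaged} relation
\[
\nu_\eta=\sum_{i}p(\eta,i\eta)\,(\SM_i^{-1})_*\nu_{i\eta},
\]
and knowing that $\nu_\omega$ depends only on $\omega_0$ does not promote this convex combination to a termwise equality. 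Concretely: in the i.i.d.\ case your pointwise identity would read $(\SM_i)_*\mu=\mu$ for each $i$ separately (where $\mu$ is the common fibre measure), whereas $\Phi$-invariance only says $\mu=\sum_i p_i(\SM_i^{-1})_*\mu$; for a generic strongly irreducible contracting family the stationary $\mu$ is \emph{not} fixed by each $\SM_i$ individually. So the step you yourself flag as ``the hard step'' is not merely hard --- the pointwise identity is the wrong target.

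The paper does not attempt that pointwise identity. It argues instead via a coupling: with $(\omega,X)\sim\nu$ and $\widetilde X:=\SM_\omega^{-1}X$, one has $\SM_\omega\widetilde X=X$ deterministically and, by $\Phi$-invariance, $(\sigma\omega,\widetilde X)\sim\nu$; the paper then reads the right-hand side of its disintegration formula as $\mE[w(\omega,\SM_\omega\widetilde X)]=\mE[w(\omega,X)]$. Whether that identification is itself airtight (it seems to presuppose $\widetilde X\mid\omega\sim\nu_{\sigma\omega}$, which is again the pointwise relation) is a separate question, but structurally the paper's argument is different from yours: it works with a single realised sample path and a deterministic cancellation, rather than with an equality of conditional laws indexed by $\omega$.
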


\begin{proof}	
For any $x\in \X$ and $\mP$-almost every $\omega\in \Omega^+$
\begin{align*}
	\cL_0 w(\sigma\omega,x) &= \sum_{\sigma\om'=\sigma\om} p\left(
	  \sigma \om , \omega' \right) w(\om' , \SM_{\om'} \cdot x)\\
	  &= \mE\left[ w(\cdot,\SM_{\, .\,}\cdot x) \, \bigm| \sigma^{-1} \F \, \right] (\om).
\end{align*}
We then have
\begin{equation} \label{E:cL_identity}
\begin{split}
	\int \cL_0 \, w(\omega,x) \nu(\diff\omega,\diff x) 
	  &= \iint \cL_0\, w(\om,x) \nu_\om (\diff x) \, \mP(\diff \omega) \\
	  &=  \iint \cL_0\, w(\sigma\om,x) \nu_{\sigma\om} (\diff x) \, \mP(\diff \omega) \quad \text{ (by the invariance of $\mP$)}\\
	  	  &= \int  \mE\left[ \int w(\cdot,\SM_{\,. \,} \cdot x) \nu_{\sigma\om} (\diff x)\, \bigm| \sigma^{-1} \F \, \right] (\om) \, \mP(\diff \omega) \\
	  &=\iint  w(\om,\SM_{\om} x) \nu_{\sigma\om} (\diff x) \, \mP(\diff \omega).
\end{split}
\end{equation}

Suppose $\nu$ is $\Phi$-invariant, and consider the quadruple $(\omega,X,\tom,\widetilde{X})$,
where $(\omega,X)$ has distribution $\nu$ and $(\tom, \widetilde{X})=(\sigma\omega, \SM_{\omega}^{-1}\cdot X)$, so that $(\tom, \widetilde{X})$ also has distribution $\nu$. The right-hand side
of \eqref{E:cL_identity} is then just the expected value of
$w(\om , \SM_\om \cdot \widetilde{X} ) = w(\om , X) $, proving assertion \ref{thm.item:L.inv}.

Suppose now that assertion \ref{thm.item:L.inv} holds.
Applying the identity \eqref{E:cL_identity} to the function $w\circ \Phi$ we have
\begin{align*}
\int \cL_0 \, [ w\circ \Phi](\om, x)  \nu(\diff\omega,\diff x)  
		&= \iint  w \circ \Phi \left( \om , \SM_{\om}\cdot x \right) \nu_{\sigma\om} (\diff x) \, \mP(\diff \omega)\\
		&= \iint w\left( \sigma\om, \SM_\om^{-1} \cdot \SM_\om \cdot x \right)
			\nu_{\sigma\om} (\diff x) \, \mP(\diff \omega)\\
		&=  \iint w\left(\om, x \right)
			\nu_{\om} (\diff x) \, \mP(\diff \omega) \quad \text{ (by the invariance of $\mP$)}\\
		&=  \iint w\left(\om, x \right)
			\nu (\diff \omega , \diff x).
\end{align*}
The left-hand
side is equal to $\int w\circ \Phi (\om,x)\nu (\diff\om,\diff x)$,
which is then equal to the final term on the right-hand side,
thus proving assertion \ref{thm.item:phi.inv}, the $\Phi$-invariance of
$\nu$. 
\end{proof}

We conclude this section with $|\cdot|_\ta$-bounds for the power $\cL_g^n$ when $n$ is sufficiently large, \autoref{thm:basic.ineq} and \autoref{cor:basic.ineq}. This type of bound is referred to as a \emph{Lasota--Yorke inequality}, from the original version
formulated in \cite{Lasota-Yorke}. The centrality of such inequalities for proving quasi-compactness (stated below in \autoref{thm:qs-cpct})\footnote{\emph{Quasi-compactness}: a bounded linear operator $T$ acting on a Banach space is quasi-compact if $\|T^n-K\|<1$ for some $n\ge 1$ and some compact linear operator $K\not=0$.} --- and hence the crucial fact (stated below in \autoref{thm:final.thm}) was emphasized by \cite{hennion1993}. Applications to transfer operators can also be found in \cite{walkden2013}.  

We start with a lemma which is an easy corollary of \autoref{prop:contra.ult.LRDS}.
\begin{lemma}\label{lm:contract.av}
Let $\shft$ be a one-sided Markovian shift space over $k$ symbols, and $\SM(\omega):\op\to\mat$ depend only on the first coordinate $\om_0$. Assume the matrix set ${\{ \SM(\om):\om\in\op \}}$ is strongly irreducible and contracting, then there exists $0<\delta<1$ such that when $n$ is sufficiently large, we have
\[\sup_{x\not=y}\mE[\dxa(\psi(n,\omega)\cdot x,\psi(n,\omega)\cdot y)]<\delta^n \dxa(x,y),\]
where $\psi(n,\om)$ is given by \eqref{eq:psi.n}.
\end{lemma}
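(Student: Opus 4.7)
The plan is to apply \autoref{prop:contra.ult.LRDS} to an auxiliary LRDS whose $n$-step iterates coincide in distribution with $\psi(n,\om)$ after a time-reversal of the underlying Markov chain. The subtlety is that $\psi$, as defined in \eqref{eq:psi.n}, satisfies a reversed cocycle identity $\psi(m+n,\om)=\psi(m,\om)\psi(n,\sigma^m\om)$ rather than the standard LRDS one, so \autoref{prop:contra.ult.LRDS} cannot be invoked directly on $\psi$.

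First I define the LRDS $\tilde\varphi$ on $\op$ by $\tilde\varphi(1,\om)=\SM(\om)$. The cocycle property then gives $\tilde\varphi(n,\om)=\SM(\sigma^{n-1}\om)\cdots\SM(\om)=\SM_{\om_{n-1}}\cdots\SM_{\om_0}$, which is the forward-order product. Because the shift is full, the semigroup $\cS_{\tilde\varphi}$ coincides with the semigroup generated by $\{\SM_i\}_{1\le i\le k}$ irrespective of multiplication order, so the hypotheses on the matrix set transfer directly to $\tilde\varphi$: it is strongly irreducible, contracting, and spatially invertible.

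Next let $\mP^R$ denote the time-reversed stationary Markov measure on $\op$. Since the original chain has every one-step transition strictly positive (by fullness), so does the reversal; hence $(\op,\F^+,\mP^R,\sigma)$ is again a Markovian full shift over $k$ symbols. By the defining property of time-reversal, the joint law of $(\om_0,\ldots,\om_{n-1})$ under $\mP^R$ equals the joint law of $(\om_{n-1},\ldots,\om_0)$ under $\mP$. Consequently,
\[
\tilde\varphi(n,\om)\ \text{under}\ \mP^R\;\stackrel{d}{=}\;\SM_{\om_0}\cdots\SM_{\om_{n-1}}=\psi(n,\om)\ \text{under}\ \mP.
\]

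Applying \autoref{prop:contra.ult.LRDS} to $\tilde\varphi$ over $(\op,\F^+,\mP^R,\sigma)$ yields
\[
\limsup_{n\to\infty}\Bigl\{\sup_{x\neq y}\mE_{\mP^R}\bigl[\dxa(\tilde\varphi(n,\om)\cdot x,\tilde\varphi(n,\om)\cdot y)/\dxa(x,y)\bigr]\Bigr\}^{1/n}<1,
\]
and by the distributional identity the same bound holds with $\tilde\varphi$ and $\mP^R$ replaced by $\psi$ and $\mP$, respectively. Choosing any $\delta$ strictly between this $\limsup$ and $1$ then gives the conclusion for all sufficiently large $n$. The main technical point is the time-reversal argument --- specifically, verifying that $\mP^R$ remains a Markovian full shift and that the distributional identity holds --- both of which reduce to stationarity and the positivity of all transitions guaranteed by fullness.
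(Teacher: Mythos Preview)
Your argument is correct and is essentially what the paper has in mind: the paper records the lemma only as an ``easy corollary'' of \autoref{prop:contra.ult.LRDS}, giving no proof. Your time-reversal step is the natural (and, unless the chain happens to be reversible, necessary) way to reconcile the reversed product order in $\psi(n,\om)=\SM_{\om_0}\cdots\SM_{\om_{n-1}}$ with the standard LRDS cocycle $\varphi(n,\om)=M_{\om_{n-1}}\cdots M_{\om_0}$ to which \autoref{prop:contra.ult.LRDS} literally applies; the paper leaves this point implicit.
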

\begin{rmk}\label{rmk:alpha}
Strong irreducibility and the contracting property are both algebraic properties of a matrix set, which does not depend on the choice of the probability measure on the sample space. However, in the proof of  \autoref{prop:contra.ult.LRDS} the choice of $\alpha_0$ depends on the probability measure. We need $\alpha_0<\beta$ for which $\mE[e^{\beta\ell(\SM(\omega))}]<\infty$. Fortunately, the latter condition follows automatically for any $\beta>0$, as in our setting the set of matrices is finite.
\end{rmk}

\begin{thm}\label{thm:basic.ineq}
Let $\shft$ be a one-sided Markovian shift space over $k$ symbols, and 
$\SM:\op\to\mat$ depend only on the first coordinate $\om_0$. Assume the matrix set $\{ \SM(\om):\om\in\op \}$ is strongly irreducible and contracting, and let $\cL_g$ be a normalized weighted Markovian transfer operator defined by \eqref{eq:defn.L.g} such that the function $g(\cdot, x)$ on $\Omega^+$ depends only on the first coordinate $\om_0$. Write
\[
	G(\omg{n},x):=\sum_{i=1}^n g\left(\omg{i}, \SM(\omg{i-1})\cdots \SM(\om')\cdot x\right),
\]
and assume
\begin{equation}\label{eq:elaborate.cond}
\left|e^{G(\omg{n},x)}-e^{G(\omg{n},y)}\right|\le H(n)\dxa(x,y), 
\end{equation}
for $H(n)$ depending only on $n$. Then there exist constants $\theta_0>0$ and $0<\delta_0<1$, such that when $0<\theta<\theta_0$ and $n$ is sufficiently large, we have
\[|\cL_g^nw|_\ta\le (H(n)+2|e^g|_\infty^n)|w|_\infty+\delta_0^n|w|_\ta.\]
\end{thm}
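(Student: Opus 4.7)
The plan is to estimate the two sources of variation in $|\cdot|_\ta$ separately --- variation in $\om$ at fixed $x$, and variation in $x$ at fixed $\om$ --- and then combine. Start from the explicit iterate
\[
(\cL_g^n w)(\om,x)=\sum_{\sigma^n\omg{n}=\om} P(n,\omg{n})\, e^{G(\omg{n},x)}\, w\bigl(\omg{n},\psi(n,\omg{n})\cdot x\bigr),
\]
and note two structural facts that will do essentially all of the work: because $\SM(\cdot)$ and $g(\cdot,\cdot)$ depend on their shift-variable only through the zeroth coordinate, both $\psi(n,\omg{n})$ and $G(\omg{n},x)$ depend on $\omg{n}$ only through $(\omg{n})_0,\dots,(\omg{n})_{n-1}$; and the normalization $\cL_u\mathbf{1}=1$ with $u=\real g$ iterates to $\sum_{\omg{n}} P(n,\omg{n})\, e^{U(\omg{n},x)}=1$ for every $(\om,x)$, making $P(n,\cdot)e^{U(\cdot,x)}$ a probability measure on preimages.

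For the $\om$-variation, if $\dt(\om,\tom)<1$ then $\om_0=\tom_0$, so pairing each preimage $\omg{n}$ of $\om$ with the preimage $\tomn$ of $\tom$ sharing its first $n$ coordinates gives $P(n,\omg{n})=P(n,\tomn)$, $G(\omg{n},x)=G(\tomn,x)$, and $\psi(n,\omg{n})=\psi(n,\tomn)$ simultaneously. The difference between paired summands collapses to a pure $w$-increment, bounded by $|w|_\ta\dt(\omg{n},\tomn)=|w|_\ta\theta^n\dt(\om,\tom)$. Summing against the weights $P e^U$ and using the normalization identity gives the ratio $\theta^n|w|_\ta$. If instead $\dt(\om,\tom)=1$, the elementary estimate $|\cL_g^n w|_\infty\le|e^g|_\infty^n|w|_\infty$ (iterate of \eqref{eq:Lg.unif.bnd}) combined with the triangle inequality yields the contribution $2|e^g|_\infty^n|w|_\infty$.

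For the $x$-variation at fixed $\om$, I decompose
\[
(\cL_g^n w)(\om,x)-(\cL_g^n w)(\om,y) = \sum P\bigl[e^{G(x)}-e^{G(y)}\bigr]w(\omg{n},\psi\cdot x) + \sum P e^{G(y)}\bigl[w(\omg{n},\psi\cdot x)-w(\omg{n},\psi\cdot y)\bigr].
\]
The assumption \eqref{eq:elaborate.cond} with $\sum P=1$ bounds the first sum by $H(n)|w|_\infty\dxa(x,y)$. For the second, I estimate $|e^{G(y)}|=e^{U(y)}\le|e^g|_\infty^n$, bound the $w$-increment by $|w|_\ta\dxa(\psi\cdot x,\psi\cdot y)$, and invoke \autoref{lm:contract.av} to control $\sum P(n,\omg{n})\dxa(\psi\cdot x,\psi\cdot y)$ by $\delta^n\dxa(x,y)$ --- legitimate because conditioning on $\sigma^n\omg{n}=\om$ leaves a finite-state Markov chain on $\K$ with the same (strongly irreducible and contracting) matrix set, and the bounded Radon--Nikodym density of the conditional law relative to the stationary law degrades the contraction rate only by a subexponential factor that is absorbed for $n$ large.

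Assembling both parts,
\[
|\cL_g^n w|_\ta \le \bigl(H(n)+2|e^g|_\infty^n\bigr)|w|_\infty + \bigl(\theta^n+|e^g|_\infty^n\delta^n\bigr)|w|_\ta,
\]
so choosing any $\delta_0\in\bigl(\max(\theta,|e^g|_\infty\delta),1\bigr)$ absorbs the leading factor of $2$ for $n$ sufficiently large and yields the stated inequality; $\theta_0$ may be taken to be any number in $(0,1)$. The hardest part is the $x$-variation step: the crude replacement $|e^{U(y)}|\le|e^g|_\infty^n$ forces the operating constraint $|e^g|_\infty\delta<1$ to ensure $\delta_0<1$. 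This is automatic in the intended application to $\cL_t$ with $|t|$ small, where $|e^g|_\infty$ is close to $1$; a fully $g$-independent bound would require a more delicate estimate exploiting that $Pe^{U(y)}$ is itself a probability measure, together with quantitative ergodicity of the resulting tilted (non-Markov) chain.
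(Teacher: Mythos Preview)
Your overall architecture matches the paper's: split $|\cdot|_\ta$ into $\omega$- and $x$-variation, exploit that $\SM$ and $g$ depend only on the first $n$ coordinates of $\omg{n}$, and use the normalization identity. But the proof as written does not establish the theorem in the stated generality, and you correctly flag the reason at the end. In the second $x$-variation term you bound $e^{U(\omg{n},y)}\le |e^g|_\infty^n$ and then apply \autoref{lm:contract.av} to the bare Markov weights $P(n,\cdot)$, producing the coefficient $|e^g|_\infty^n\delta^n$ on $|w|_\ta$. This forces the side condition $|e^g|_\infty\delta<1$, which is not a hypothesis and is not under your control: $\delta$ is fixed by the matrix set via \autoref{lm:contract.av}, while $|e^g|_\infty$ may be arbitrarily large. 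So for general normalized $g$ the argument fails to give $\delta_0<1$.

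The paper closes this gap by doing exactly the ``more delicate estimate'' you allude to, and it is simpler than you suggest (no quantitative ergodicity of a tilted chain is needed). Since $\cL_g$ is normalized, $|P(n,\omg{n})e^{G(\omg{n},x)}|=P(n,\omg{n})e^{U(\omg{n},x)}$ is itself a probability on preimages; the paper applies \autoref{lm:contract.av} directly with respect to this tilted measure, which is legitimate by \autoref{rmk:alpha} (for a finite matrix set the contraction estimate holds for any probability on the shift). This gives $\delta^n|w|_\ta$ in the $x$-variation with no $|e^g|_\infty^n$ factor. Interestingly, the paper is \emph{cruder} than you on the $\omega$-side: it factors out $|e^G|_\infty\le |e^g|_\infty^n$ before pairing, obtaining $(\theta|e^g|_\infty)^n|w|_\ta$ rather than your sharper $\theta^n|w|_\ta$. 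That is precisely the origin of $\theta_0$ in the statement: taking $\theta_0=\delta/|e^g|_\infty$ forces $(\theta|e^g|_\infty)^n<\delta^n$. The paper's placement of the $|e^g|_\infty^n$ loss is the strategically correct one, because it lands on the parameter $\theta$ that we are free to tune, whereas yours lands on $\delta$, which we are not.
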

\begin{proof} 
For $\om,\tom\in\op$ and $x\in\X$, as the function $g(\cdot, x)$ on $\op$ depends only on the first coordinate $\om_0$, we have
\begin{align*}
\bigl| (\cL_g^nw)(&\om,x)-(\cL_g^nw)(\tom,x) \bigr|\\
\le & |e^G|_\infty \cdot \biggl|\sum_{\nso} P(n,\omg{n})w\bigl(\omega^{(n)}, \psi(n,\om^{(n)})\cdot x \bigr)\\
&\hspace*{5cm}-\sum_{\tnso}P \bigl( n,\tom^{(n)})w(\tom^{(n)},\psi(n,\tom^{(n)})\cdot x \bigr)\biggr|.
\end{align*}

Each $\omn$ satisfying $\nso$ is of the form $(i_n\dots i_1\om)$. For each $\omn,\tomn$ of the form $\omn=(i_n\dots i_1\om),\tomn=(i_n\dots i_1\tom)$, by \eqref{eq:P.n} and \eqref{eq:psi.n}, we have
\begin{align*}\psi(n,\omn)&=\psi(n,\tomn), \\
P(n,\omn)&=P(n-1,\omn)p(\om, i_1\om),\\
 P(n,\tomn)&=P(n-1,\omn)p(\tom, i_1\tom) 
\end{align*}
Therefore 
\begin{align}
\bigl| (\cL_g^nw)(&\om,x)-(\cL_g^nw)(\tom,x) \bigr| \nonumber \\
\le & |e^G|_\infty\sum_{i=1}^k\sum_{\sigma^{n-1}\omn=(i\om)}P(n-1,\omn)|p(\om, i\om)-p(\tom, i\tom)||w|_\infty \label{eq:Lg.n.equicont1} \\
& \qquad\qquad +|e^G|_\infty\left|w(\omn, \psi(n,\omn)\cdot x) - w(\tomn, \psi(n,\omn)\cdot x)\right| \label{eq:Lg.n.equicont2} \\
\le & |e^g|_\infty^n (2|w|_\infty + \theta^n|w|_\ta)\dt(\om,\tom),\nonumber 
\end{align}
since $\sum_{\sigma^{n-1}\omg{n}=(i\om)} P(n-1, \om^{(n)})=1$,
and by the Markov property of $\mP$ 
$$
\sum_{i=1}^k |p(\om, i\om)-p(\tom, i\tom)|\le 2\cdot \mathbf{1}_{\{ \omega_0\neq \tom_0 \}} \le 2d_\theta(\om,\tom).
$$

Note that $\cL_g$ being normalized implies that 
\[
	1=\cL_g\mathbf{1}=\sum_\oso p(\om,\om')e^{g(\om',\SM(\om')\cdot x)},
\]
hence by iteration $\left|P(n,\omg{n})e^{G(\omg{n},x)}\right|$
defines a probability measure on $\omg{n}$; we write
$\mE_{pe^g}$ for expectation with respect to this distribution.
Now fix any $\om\in\op$ and $x,y\in\X$. We have
\begin{align*}
\bigl|(\cL_g^nw)(&\om,x)-(\cL_g^nw)(\om,y) \bigr|\\
\le & \sum_\nso P(n,\omg{n})\left|e^{G(\omg{n},x)}-e^{G(\omg{n},y)}\right||w|_\infty\\
&\qquad+ \sum_{\nso}\left|w(\omg{n},\psi(n,\omg{n})\cdot x)-w(\omg{n},\psi(n,\omg{n})\cdot y)\right|\\
&\hspace*{5cm} \times \left|P(n,\omg{n})e^{G(\omg{n},x)}\right|\\
\le &  H(n)|w|_\infty \dxa(x,y) +|w|_\ta\mE_{pe^g}\left[\dxa\left(\psi(n,\omg{n})\cdot x,\psi(n,\omg{n})\cdot y\right)\right]\\
\le & \left( H(n)|w|_\infty+\delta^n |w|_\ta \right)\dxa(x,y),
\end{align*}
by \autoref{lm:contract.av} and \autoref{rmk:alpha}.
Therefore, when $n$ is sufficiently large we would have
\[
	|\cL_g^nw|_\ta\le (H(n)+2|e^g|_\infty^n)|w|_\infty+(\theta^n|e^g|_\infty^n+\delta^n)|w|_\ta.
\]
Now choosing $\theta< \theta_0:=\delta/|e^g|_\infty$, we have
\[
	\theta^n|e^g|_\infty^n+\delta^n<2\delta^n<\delta_0^n
\]
for some $0<\delta_0<1$, when $n$ is sufficiently large.
\end{proof}

\begin{cor}\label{cor:basic.ineq}
Let $\cL_g$ be a normalized Markovian transfer operator satisfying the assumptions of \autoref{thm:basic.ineq}, then there exists $\theta_0>0$ such that when $0<\theta<\theta_0$, there exist constants
$C>0$ and $\delta\in (0,1)$ 
such that for all $n$ sufficiently large
\begin{equation} \label{E:basic.ineq}
	|\cL_g^{n}w|_\ta\le C|w|_\infty+\delta^{n}|w|_\ta, 
\end{equation}
for any $w\in\gta$.
\end{cor}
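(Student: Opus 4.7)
The plan is to iterate the bound of \autoref{thm:basic.ineq} at a fixed large power, exploiting the sup-norm contraction of \autoref{lem:normalized_sup_norm} to keep the coefficient multiplying $|w|_\infty$ from growing as we iterate. Without normalization, each iteration would inflate that coefficient by roughly $|e^g|_\infty^{N}$ and the argument would collapse; normalization is therefore the crucial hypothesis that makes the Lasota--Yorke form work.

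First, fix $\theta_0$ as in \autoref{thm:basic.ineq}, take $0<\theta<\theta_0$, and choose $N$ sufficiently large that the theorem furnishes
\[
  |\cL_g^N w|_\ta \le A_N\, |w|_\infty + \eta\, |w|_\ta, \qquad w\in\gta,
\]
where $A_N := H(N)+2|e^g|_\infty^N$ is a finite constant depending only on $N$ and $\eta := \delta_0^N<1$. Applying this with $\cL_g^{jN} w$ in place of $w$ and invoking $|\cL_g^{jN} w|_\infty\le |w|_\infty$ from \autoref{lem:normalized_sup_norm} gives the recursion
\[
  |\cL_g^{(j+1)N} w|_\ta \le A_N\, |w|_\infty + \eta\, |\cL_g^{jN} w|_\ta,
\]
which unrolls via a geometric series to
\[
  |\cL_g^{jN} w|_\ta \le \frac{A_N}{1-\eta}|w|_\infty + \eta^j |w|_\ta, \qquad j\ge 0.
\]

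To extend from multiples of $N$ to arbitrary $n\ge N$, write $n=jN+r$ with $0\le r<N$ and apply the previous bound with $\cL_g^r w$ in place of $w$. By \autoref{lem:normalized_sup_norm} we have $|\cL_g^r w|_\infty\le|w|_\infty$; by \autoref{thm:Lg.bnd} the operator $\cL_g^r$ is bounded on $\gtan$, so there is a constant $B$ depending only on $N$ with $|\cL_g^r w|_\ta\le B(|w|_\infty+|w|_\ta)$ for every $0\le r<N$. Using $\eta^j=\delta_0^{jN}\le\delta_0^{-N}\delta_0^{n}$, these combine to give
\[
  |\cL_g^{n} w|_\ta \le \Bigl(\frac{A_N}{1-\eta}+B\delta_0^{-N}\delta_0^{n}\Bigr)|w|_\infty + B\delta_0^{-N}\delta_0^{n}\,|w|_\ta.
\]
For any chosen $\delta\in(\delta_0,1)$, the inequality $B\delta_0^{-N}\delta_0^{n}\le\delta^{n}$ holds once $n$ is large enough (since $\delta/\delta_0>1$), so absorbing the bounded $|w|_\infty$-terms into a single constant $C$ yields the claimed inequality $|\cL_g^n w|_\ta\le C|w|_\infty+\delta^n|w|_\ta$. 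The only non-routine ingredient beyond \autoref{thm:basic.ineq} is the sup-norm contraction; the passage from multiples of $N$ to general $n$ is pure bookkeeping.
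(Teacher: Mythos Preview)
Your proof is correct and follows essentially the same approach as the paper: iterate the single-scale bound from \autoref{thm:basic.ineq} at a fixed large power and use \autoref{lem:normalized_sup_norm} to control the sup-norm term in the recursion. The only cosmetic difference is in handling the remainder modulo $N$: the paper applies \autoref{thm:basic.ineq} directly at every exponent in the window $[m_0,2m_0)$, whereas you invoke the boundedness of $\cL_g^r$ from \autoref{thm:Lg.bnd} for $0\le r<N$ and then pass to a slightly larger $\delta\in(\delta_0,1)$; both are routine and yield the same conclusion.
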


\begin{proof}
By \autoref{thm:basic.ineq}, there exists $m_0 \ge 1$ and $\delta\in (0,1)$ such that for all $w\in\gta$ and $0\le m\le m_0-1$
\[
	|\cL_g^{m_0+m} w|_\ta\le C_{1}|w|_\infty+\delta^{m_0+m}|w|_\ta
\]
where $C_{1}=\max_{m_0 \le n\le 2m_0-1} H(n)+2|e^g|_\infty^{n}$. 
Combining this with \autoref{lem:normalized_sup_norm} we have for $k\ge 1$
$$
	|\cL_g^{(k+1)m_0+m}w|_\ta \le C_{1}|\cL_g^{km_0}w|_\infty
		+\delta^{m_0+m} |\cL_g^{km_0}w|_\ta
$$
By induction we show that for all positive integers $k$
$$
	|\cL_g^{km_0+m}w|_\ta\le C_1\left( 1+\delta^{m_0} +\cdots
		+ \delta^{(k-1)m_0 } \right) + \delta^{km_0} |w|_\ta.
$$
Thus \eqref{E:basic.ineq} holds for $n\ge m_0$ with
$C=C_1/(1-\delta^{m_0})$.
\end{proof}

\section{Spectral Properties}\label{sec:spec}
We start with the spectral properties of $\cL_t$ when $t=0$. Here we define an operator $\mathcal{Q}$ on $\gta$ by 
\begin{equation}\label{eq:defn.Q}
(\mathcal{Q}w)(\omega,x)=\int w \ \diff\nu,
\end{equation}
where $\nu$ is the invariant measure for the RDS $\varphi$ characterised by $\varphi(1,\om)=\SM(\om)^{-1}$. By \autoref{thm:mkv.inv.meas}, we know $\nu$ is also an eigenmeasure of $\cL_0$.
\begin{thm}\label{thm:qs-cpct}
Let $\shft$ be a one-sided Markovian shift space over $k$ symbols, and $\SM(\omega):\op\to\mat$ depend only on the first coordinate $\om_0$. Assume the matrix set ${\{ \SM(\om):\om\in\op \}}$ is strongly irreducible and contracting. Let $\cL_0$ be the parametrised Markovian transfer operator $\cL_t$ defined by \eqref{eq:defn.L.mkv} acting on $\gta$ when $t=0$ and $\mathcal{Q}$ be defined as in \eqref{eq:defn.Q}. Then there exists $\theta_0>0$ such that when $0<\theta<\theta_0$ and $n$ sufficiently large, we have  
\[\|\cL_0^{n}-\mathcal{Q}\|_\ta^{1/{n}}<1.\]
\end{thm}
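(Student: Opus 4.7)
The plan is to apply a Hennion-style quasi-compactness argument based on the Lasota--Yorke inequality of Corollary \ref{cor:basic.ineq}, and then to identify the peripheral spectrum explicitly. First, observe that $\cL_0$ is normalized: $\cL_0 \mathbf{1}(\omega,x) = \sum_{\sigma\omega'=\omega}p(\omega,\omega') = 1$. Taking $g \equiv 0$ in Theorem \ref{thm:basic.ineq} (so $H(n)=0$ and $|e^g|_\infty = 1$) and invoking Corollary \ref{cor:basic.ineq} yields constants $C > 0$ and $\delta \in (0,1)$ with $|\cL_0^n w|_\ta \le C|w|_\infty + \delta^n|w|_\ta$ for all sufficiently large $n$. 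Combined with $|\cL_0^n w|_\infty \le |w|_\infty$ from \autoref{lem:normalized_sup_norm}, and the Arzel\`{a}--Ascoli precompactness of bounded sets of $\gta$ in the uniform topology on the compact space $\op \times \X$, this is the classical hypothesis of the Hennion--Ionescu--Tulcea--Marinescu theorem \cite{hennion1993}: $\cL_0$ is quasi-compact on $\gtan$ with essential spectral radius at most $\delta$, and $\cL_0 \mathbf{1} = \mathbf{1}$ shows that the spectral radius equals $1$.

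Next, I would identify the peripheral spectrum as the single eigenvalue $\lambda = 1$ with eigenspace $\mathbb{C}\mathbf{1}$. Suppose $\cL_0 w = \lambda w$ with $|\lambda|=1$, $w \in \gta$, $w \ne 0$. Iterating and using the $\alpha$-H\"{o}lder regularity of $w$ in $x$,
\[
|w(\omega,x_1) - w(\omega,x_2)| \le |w|_\ta \sum_{\nso} P(n,\omn)\, \dxa\bigl(\psi(n,\omn)\cdot x_1, \psi(n,\omn)\cdot x_2\bigr).
\]
The right-hand side is a conditional expectation over backward Markov paths started at $\omega_0$; since the shift is full all transition probabilities are positive, so this conditional law is bounded above by a constant multiple of the stationary law, and \autoref{lm:contract.av} (whose hypotheses of strong irreducibility and contraction are algebraic properties of the matrix set and survive time reversal) forces exponential decay in $n$. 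In the limit $w(\omega,x) = \tilde w(\omega)$ is independent of $x$. To then show $\tilde w$ is constant, apply pointwise Jensen $|\cL_0 \tilde w|^2 \le \cL_0|\tilde w|^2$ and integrate against $\nu$: using $|\lambda|^2 = 1$ and the $\cL_0$-invariance of $\nu$ (\autoref{thm:mkv.inv.meas}), Jensen is a $\mP$-a.s.\ equality. Strict convexity of $|\cdot|^2$ together with positivity of every $P_{ij}$ forces $\tilde w(i\omega)$ to be independent of $i \in \K$ for $\mP$-a.e.\ $\omega$, so $\tilde w = \hat w_1 \circ \sigma$ a.s. The same argument applied to $\cL_0^n$ gives $\tilde w = \hat w_n \circ \sigma^n$ for every $n$, so $\tilde w$ is measurable with respect to the tail $\sigma$-algebra $\bigcap_n \sigma^{-n}\F^+$, which is $\mP$-trivial by the Doeblin property of the full Markov shift. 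Hence $\tilde w$ is constant $\mP$-a.s.\ and, by continuity, everywhere; the eigenvalue equation then forces $\lambda = 1$.

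Algebraic simplicity follows because if $(\cL_0 - I)w = c\mathbf{1}$, integrating against $\nu$ yields $c = 0$. The rank-one spectral projection $\Pi$ for the eigenvalue 1 has image $\mathbb{C}\mathbf{1}$, and for any $w \in \gta$, $\cL_0$-invariance of $\nu$ combined with $\cL_0^n(w - \Pi w) = R^n(w - \Pi w) \to 0$ in $|\cdot|_\infty$ (since $\mathrm{spr}(R) \le \delta < 1$) gives $\int w\,\diff\nu = \int \Pi w\,\diff\nu$, identifying $\Pi = \mathcal{Q}$. The spectral decomposition $\cL_0^n = \mathcal{Q} + R^n$ then yields $\|\cL_0^n - \mathcal{Q}\|_\ta^{1/n} \to \mathrm{spr}(R) < 1$, giving the claim for $n$ sufficiently large. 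The main obstacle is the peripheral identification, which couples the matrix-contraction on $\X$ (killing the $x$-dependence via \autoref{lm:contract.av}) with the Markov-mixing on $\op$ (killing the remaining $\omega$-dependence via tail triviality).
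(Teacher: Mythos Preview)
Your argument is correct and reaches the same conclusion, but the route is genuinely different from the paper's. The paper proceeds directly: from the Lasota--Yorke bound with $g=0$ it uses Arzel\`a--Ascoli to extract uniform subsequential limits of $(\cL_0^n w)$, then exploits the monotonicity $\sup \cL_0^{n}w\ge \sup \cL_0^{n+1}w$ together with strict positivity of the transition probabilities to force every such limit to be constant; that constant is identified as $\mathcal{Q}w$ via the $\cL_0$-invariance of $\nu$, and a final compactness-plus-iteration step upgrades uniform convergence to the $\|\cdot\|_\ta$-estimate. No external spectral theorem is invoked, and the peripheral spectrum is never analysed as such.

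You instead feed the Lasota--Yorke inequality into Hennion's quasi-compactness theorem and then identify the peripheral spectrum by eigenfunction analysis: matrix contraction (\autoref{lm:contract.av}) kills the $x$-dependence, and a Jensen/tail-triviality argument on $\op$ kills the remaining $\omega$-dependence. This is more modular and makes the spectral picture $\cL_0=\mathcal{Q}+R$ with $\operatorname{spr}(R)<1$ explicit from the outset, at the cost of citing \cite{hennion1993} as a black box. One point to tighten: once your Jensen step yields $\tilde w(i\omega)=\tilde w(j\omega)$ for all $i,j$, the eigenvalue relation gives $\tilde w=\lambda\,(\tilde w\circ\sigma)$, so $|\tilde w|$ is $\sigma$-invariant and constant, but $\tilde w$ itself is tail-measurable only up to the rotating phase $\lambda^n$; the cleanest way to exclude $\lambda\neq 1$ is to evaluate $\tilde w=\lambda\,\tilde w\circ\sigma$ at a fixed point $(i,i,i,\dots)$ of $\sigma$, rather than to invoke tail triviality directly. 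The paper's $\sup$-monotonicity device and your Jensen equality are essentially dual ways of exploiting the same strict positivity of $P$.
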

\begin{proof}
Note $\cL_0$ is the same as the weighted transfer operator $\cL_g$
when $g=0$. Therefore, the condition \eqref{eq:elaborate.cond} is automatically satisfied, with $H(n)=0$. Fix $w\in\gta$, then by \autoref{thm:basic.ineq} we know there exists $\theta_0>0$ and $0<\delta<1$ such that when $n$ is sufficiently large, we have 
\begin{equation}\label{eq:2}
|\cL_0^{n}w|_\ta\le 2 |w|_\infty+\delta^{n}|w|_\ta.
\end{equation}
This shows that $\{\cL_0^{n}w\}$ is equicontinuous. Therefore, by the Arzel\`{a}--Ascoli Theorem, ${\{\cL_0^n w: n\ge 1\}}$ is relatively
compact with respect to the uniform topology. 
Let $\{\cL_0^{n_j}w:j\ge 1 \}$ be a subsequence converging
to a point $w^*$.

For $\om,\tom\in\op, x,y\in \X$ with $\om_0=\tom_0$, by the same proof as for \autoref{thm:basic.ineq}, we have
\[ 
	\bigl|\cL_0^n(\om, x)-\cL_0^n(\tom, y) \bigr|
	\le \theta^n|w|_\ta d_\theta(\om,\tom)+\delta^n |w|_\ta \dxa(x,y).
\]
(Here the term corresponding to \eqref{eq:Lg.n.equicont1} vanishes, because $\om_0=\tom_0$.) Letting $j\to\infty$,
hence $n_j\to\infty$ shows that $w^*(\om, x)$ depends only on $\om_0$. Denote $w^*(\om, x)=w^*_{\om_0}$.

As $\cL_0$ is normalized, by \autoref{lem:normalized_sup_norm}
\[
	\sup w\ge \sup \cL_0 w\ge \cdots \ge \sup \cL_0^n w\ge \cdots .
\]
Thus
$$
  0\ge \sup \cL_0^{n_j} w - \sup \cL_0^{n_j+1} w \to \sup w^* - \sup \cL_0 w^*
$$
as $n\to\infty$ by the uniform convergence to $w^*$, since 
$\cL_0$ is a continuous operator with respect to the sup norm. Similarly,
$$
  0\le \sup \cL_0^{n_{j+1}} w - \sup \cL_0^{n_j+1} w \to \sup w^* - \sup \cL_0 w^*.
$$
We may conclude that $\sup w^*=\sup \cL_0w^*$. Since $\opx$ is compact, the supremum of $w^*$ is attained at a point $(\om, x)\in \opx$, 
and that of $\sup\cL_0w^*$ at a point $(\tom, y)$. We have
\[
	w^*(\om, x)=(\cL_0 w^*)(\tom, y)=\sum_{i=1}^k p(\tom, i\tom)w^*(i\tom, \SM_i\cdot y).
\]
As $w^*$ depends only on $\om_0$, we have 
\[
	\sup w^*=\sum_{i=1}^k p(\tom, i\tom)w_i^*\le \sup w^*.
\]
This shows that $w_i^*=\sup w^*$ for each $1\le i\le k$, and $w^*$ is constant on $\opx$.

As $\nu$ is an invariant measure we have $\mathcal{Q}\cL_0=\mathcal{Q}$. It follows that
\[
	w^*=\mathcal{Q}w^*=\lim_{j\to\infty}\mathcal{Q}\cL_0^{n_j}w=\mathcal{Q}w.
\]
By applying the same arguments to each subsequence of $\{\cL_0^n w:n\ge1\}$, we can obtain a further subsequence converging to the same limit $\mathcal{Q}w$. Thus $\cL_0^n w$ converges uniformly to $\mathcal{Q}w$ as $n\to\infty$.

Now for $n, m\ge 0$, by \eqref{eq:2} we have
\begin{align*}
|(\cL_0^{n+m}-&\mathcal{Q})(w)|_\ta= |\cL_0^n(\cL_0^m-\mathcal{Q})(w)|_\ta\\
\le & 2|(\cL_0^m-\mathcal{Q})(w)|_\infty +\delta^n |(\cL_0^m-\mathcal{Q})(w)|_\ta\\
= & 2 |(\cL_0^m-\mathcal{Q})(w)|_\infty + \delta^n |\cL_0^m w|_\ta \numberthis \label{eq:lminusq}\\
\le & 2 |(\cL_0^m-\mathcal{Q})(w)|_\infty + 2\delta^n |w|_\infty+\delta^{n+m}|w|_\ta, \numberthis \label{eq:nplusk}
\end{align*}
where the equality in \eqref{eq:lminusq} follows from
\[
\left|(\cL_0^nw-\mathcal{Q}w)(\om,x)-(\cL_0^nw-\mathcal{Q}w)(\tom,y)\right|=|(\cL_0^nw)(\om,x)-(\cL_0^nw)(\tom,y)|. 
\]

Define $\Gamma=\{w\in\gta: \|w\|_\ta\le 1 \}$, which is compact in the uniform topology by the Arzel\`{a}--Ascoli theorem. Therefore for any $\epsilon>0$, there exists an integer $T$ (not depending on $w\in\Gamma$), such that when $m>T$, we have $|\cL_0^mw-\mathcal{Q}w|_\infty<\epsilon$. Choose an integer $N>0$ such that when $n>N$, $\delta^n <\epsilon$. Then for any $w\in\Gamma$, by \eqref{eq:nplusk}, we have
\begin{align*}
\|(\cL_0^{n+m}-\mathcal{Q})(w)\|_\ta = & |(\cL_0^{n+m}-\mathcal{Q})(w)|_\infty+|(\cL_0^{n+m}-\mathcal{Q})(w)|_\ta \\
\le & \epsilon+(2\epsilon + 2\epsilon|w|_\infty+\epsilon|w|_\ta)\le 6\epsilon.
\end{align*}
Now the conclusion follows by choosing $\epsilon<1/6$.

\end{proof}
\begin{rmk}
$\cL_0$ obviously has 1 as an eigenvalue. This theorem tells us that $1$ is, in fact, the top eigenvalue, and that it is simple and isolated. More precisely, $\cL_0$ can be written as $\mathcal{Q}+\mathcal{R}$ where $\mathcal{Q}$ is the one-dimensional projection given by \eqref{eq:defn.Q} and $\mathcal{R}=\cL_0-\mathcal{Q}$ has spectral radius strictly smaller than $1$ with $\mathcal{QR}=\mathcal{RQ}=0$, since the spectral radius of $\mathcal{R}$ is given by $\inf\{\|\mathcal{R}^n\|_\ta^{1/n}:n\ge 1 \}$ and $\mathcal{R}^n=\cL_0^n-\mathcal{Q}$.
\end{rmk}

We now turn to the corresponding Lasota--Yorke inequality for $\cL_t$.

\begin{lemma}\label{lm:nK}
For any $n\ge 1,\omega\in\op$, we have $\ell(\psi(n,\om))\le nK$,  where $K:=\max_{1\le i\le k}\ell(\SM_i).$
\end{lemma}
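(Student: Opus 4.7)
The plan is to unfold the definition of $\psi(n,\omega)$ and use submultiplicativity of the operator norm together with the fact that $\log^+$ is subadditive on nonnegative arguments.

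First, I would observe that by \eqref{eq:psi.n},
\[
\psi(n,\omega)=\SM(\omega)\SM(\sigma\omega)\cdots\SM(\sigma^{n-1}\omega),
\]
so submultiplicativity gives $\|\psi(n,\omega)\|\le\prod_{i=0}^{n-1}\|\SM(\sigma^i\omega)\|$. Taking $\log^+$ and using the fact that $\log^+(ab)\le\log^+a+\log^+b$ for $a,b>0$ yields
\[
\log^+\|\psi(n,\omega)\|\le\sum_{i=0}^{n-1}\log^+\|\SM(\sigma^i\omega)\|\le nK,
\]
since each $\SM(\sigma^i\omega)$ is one of the $\SM_j$ and hence $\log^+\|\SM(\sigma^i\omega)\|\le\ell(\SM(\sigma^i\omega))\le K$.

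Next I would do the same calculation for the inverse: since each $\SM(\sigma^i\omega)$ is invertible,
\[
\psi(n,\omega)^{-1}=\SM(\sigma^{n-1}\omega)^{-1}\cdots\SM(\sigma\omega)^{-1}\SM(\omega)^{-1},
\]
so by the same reasoning
\[
\log^+\|\psi(n,\omega)^{-1}\|\le\sum_{i=0}^{n-1}\log^+\|\SM(\sigma^i\omega)^{-1}\|\le nK.
\]
Taking the maximum of the two bounds gives $\ell(\psi(n,\omega))=\sup\{\log^+\|\psi(n,\omega)\|,\log^+\|\psi(n,\omega)^{-1}\|\}\le nK$, as required. There is no real obstacle here; the only point to keep track of is that $\log^+$ (rather than $\log$) is subadditive, which is immediate since $\log^+(ab)\le\log^+(a\vee 1)+\log^+(b\vee 1)=\log^+a+\log^+b$.
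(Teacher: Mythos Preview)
Your proof is correct and follows essentially the same approach as the paper: the paper first isolates the general inequality $\ell(AB)\le\ell(A)+\ell(B)$ (proved via the same two $\log^+$ bounds you use) and then applies it inductively, while you apply the submultiplicativity and $\log^+$-subadditivity directly to the full product. The content is identical.
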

\begin{proof} 
We notice that for any $A, B\in\mat$, $\ell(AB)\le\ell(A)+\ell(B)$ follows immediately from the following two inequalities.
\[\log^+\|AB\|\le \log^+\|A\|+\log^+\|B\|\le \ell(A)+\ell(B),\]
\[\log^+\|(AB)^{-1}\|\le \log^+\|B^{-1}\|+\log^+\|A^{-1}\|\le \ell(A)+\ell(B).\]
\end{proof}

\begin{thm}\label{thm:basic.ineq.Lt}
Let $\shft$ be a one-sided Markovian shift space over $k$ symbols, and $\SM(\omega):\op\to\mat$ depend only on the first coordinate $\om_0$. Assume the matrix set $\{ \SM(\om):\om\in\op \}$ is strongly irreducible and contracting. Let $t$ be purely imaginary and $\cL_t$ be the parametrised Makovian transfer operators defined by \eqref{eq:defn.L.mkv} acting on $\gta$. Then there exists $\theta_0>0$ such that for any $\theta \in (0,\theta_0)$ there exist constants
$C>0$ and $\delta\in (0,1)$ 
\[
	|\cL_t^{n_k}w|_\ta\le C|w|_\infty+\delta_0^{n_k}|w|_\ta
\]
for any $w\in\gta$.
\end{thm}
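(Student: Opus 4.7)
The plan is to identify $\cL_t$ for purely imaginary $t$ as a special case of the normalized weighted Markovian transfer operator $\cL_g$ already treated in \autoref{thm:basic.ineq}, and then apply the iteration argument of \autoref{cor:basic.ineq} to convert the resulting bound into a true Lasota--Yorke inequality with a constant coefficient on $|w|_\infty$.

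First I would verify the hypotheses of \autoref{thm:basic.ineq}. Take $g(\omega,x)=-t\log\|\SM(\omega)^{-1}x\|$, exactly as in the setup after \autoref{cor:Lt.bnd}, so that $\cL_t=\cL_g$. When $t$ is purely imaginary, $g$ is purely imaginary, whence $|e^g|_\infty=1$ and $\cL_u\mathbf{1}=\mathbf{1}$ for $u=\real g=0$, so $\cL_t$ is normalized. The function $g(\cdot,x)$ depends only on $\omega_0$ because $\SM(\omega)$ does. The iterated exponent appearing in \autoref{thm:basic.ineq} simplifies by the telescoping identity to $G(\omg{n},x)=t\log\|\psi(n,\omg{n})x\|$, and combining \autoref{lm:log.bnd}\ref{lm.item:e.log} (with $|\real t|=0$) with \autoref{lm:nK} gives
\[
  \bigl|e^{G(\omg{n},x)}-e^{G(\omg{n},y)}\bigr|\le c_2\,e^{2\alpha\,\ell(\psi(n,\omg{n}))}\,\dxa(x,y)\le c_2\,e^{2\alpha n K}\,\dxa(x,y),
\]
so \eqref{eq:elaborate.cond} holds with $H(n)=c_2 e^{2\alpha n K}$. \autoref{thm:basic.ineq} then yields, for some $\theta_0>0$, all $\theta\in(0,\theta_0)$ and all $n$ sufficiently large,
\[
  |\cL_t^n w|_\ta\le\bigl(c_2 e^{2\alpha n K}+2\bigr)|w|_\infty+\delta_0^n|w|_\ta.
\]

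The main obstacle is that the coefficient of $|w|_\infty$ here grows exponentially in $n$, reflecting the loss of Hölder control on $x\mapsto e^{t\log\|Mx\|}$ as $\ell(M)$ grows. This is exactly the situation handled by \autoref{cor:basic.ineq}, and the way out is the same iteration trick: I would fix one large $m_0$ for which the above bound holds, set $C_1:=c_2 e^{2\alpha m_0 K}+2$ (a finite constant), and then exploit normalization via \autoref{lem:normalized_sup_norm} to propagate the bound along blocks of length $m_0$. Concretely, writing $n=k m_0+m$ with $0\le m<m_0$, applying the bound to $\cL_t^{(k-1)m_0+m}w$ and using $|\cL_t\cdot|_\infty\le|\cdot|_\infty$ gives
\[
  |\cL_t^{km_0+m}w|_\ta\le C_1 |w|_\infty+\delta_0^{m_0}\,|\cL_t^{(k-1)m_0+m}w|_\ta,
\]
and a geometric iteration in $k$ produces $|\cL_t^n w|_\ta\le C|w|_\infty+\delta^n|w|_\ta$ for all sufficiently large $n$, with $C=C_1/(1-\delta_0^{m_0})$ and any $\delta\in(\delta_0,1)$ absorbing the remainder $m$. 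The crucial use of the purely imaginary hypothesis is twofold: it makes $\cL_t$ normalized (giving the contractive sup-norm estimate that powers the iteration), and it kills the $(1+\alpha)|\real t|\,\ell(M)$ term in \autoref{lm:log.bnd}\ref{lm.item:e.log} so that $H(n)$ is at least controlled enough for the block argument to close.
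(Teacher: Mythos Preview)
Your proposal is correct and follows essentially the same route as the paper: identify $\cL_t$ as the normalized $\cL_g$ with $g(\omega,x)=-t\log\|\SM(\omega)^{-1}x\|$, verify \eqref{eq:elaborate.cond} via \autoref{lm:log.bnd}\ref{lm.item:e.log} and \autoref{lm:nK} to get $H(n)=c_2e^{2\alpha nK}$, and then invoke the block-iteration argument. The only cosmetic difference is that the paper cites \autoref{cor:basic.ineq} directly (its hypotheses coincide with those of \autoref{thm:basic.ineq}, so once \eqref{eq:elaborate.cond} is checked there is nothing more to do), whereas you unpack that iteration by hand; both arrive at the same bound.
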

\begin{proof}
Notice first that when $t$ is purely imaginary, $\cL_t$ is normalized. Now by \autoref{cor:basic.ineq}, we only need to verify inequality \eqref{eq:elaborate.cond}. Note $\cL_t$ equals $\cL_g$ when 
\[g(\om, x)=-t\log\|\SM(\om)^{-1}x\|\]
by the discussion after \autoref{thm:Lg.bnd}. Since $M(\om)$ only depends on the first coordinate $\om_0$, we know that $g$ (as a function of $\Omega^+$) also only depends on the first coordinate. Moreover,
\[G(\omg{n},x)=t\log\|\SM(\omg{n})\cdots \SM(\om')x\|. \]
Then by \autoref{lm:log.bnd} \ref{lm.item:e.log}, we have
\begin{align*}
|e^{G(\omg{n},x)}-e^{G(\omg{n},y)}|&\le c_2 e^{[(1+\alpha)|\real t|+2\alpha]\ell(\psi(n,\omg{n}))}\dxa(x,y)\\
&\le c_2 e^{[(1+\alpha)|\real t|+2\alpha]nK}\dxa(x,y),
\end{align*}
by \autoref{lm:nK}. Therefore $\eqref{eq:elaborate.cond}$ is satisfied with $H(n)=c_2 e^{2\alpha nK}$ as $\real t=0$.
\end{proof}

Now we now state a general perturbation theorem from \cite{Keller},
from which our key result \autoref{thm:final.thm} will follow.
\begin{thm}\label{thm:perturbation}
Let $(B,\|\cdot\|)$ be a Banach space with a second norm $|\cdot|$ such that $|\cdot|\le\|\cdot\|$ (we do not require $(B,|\cdot|)$ to be complete). Let $\cP_\epsilon:B\to B$ be a family of bounded linear operators for some parameter $\epsilon$ in a set containing $0$. Suppose
\begin{enumerate}[label=(\alph*)]
\item the inclusion $\iota:(B,\|\cdot\|)\hookrightarrow(B,|\cdot|)$ is compact;
\item there exists a set $\mathcal{E}$ containing $0$, for which $\cP_\epsilon \ (\epsilon\in\mathcal{E})$ satisfies the uniform Lasota--Yorke inequality. That is, there exist $n_0>0$, $0<\delta<1$, and $C>0$ such that 
\[\|\cP_\epsilon^{n_0}w\|\le \delta\|w\|+C|w|,\quad \forall\epsilon\in\mathcal{E};\]
\item there is a monotone upper semi-continuous function $\psi$ such that $\psi(\epsilon)\to 0$ as $\epsilon\to 0$ and $\sup_{w\in B}|(\cP_\epsilon-\cP_0) w|/\|w\|\le \psi(\epsilon)$.
\end{enumerate}
Suppose $\cP_0$ has a simple maximal eigenvalue $1$ with corresponding eigenprojection $\nu$. Then there exists $\epsilon_0>0$ such that when $|\epsilon|<\epsilon_0$ in $\mathcal{E}$, $\cP_\epsilon$ has a simple maximal eigenvalue with a corresponding eigenprojection $\nu_\epsilon$.
\end{thm}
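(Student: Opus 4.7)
The plan is to prove this via resolvent perturbation analysis, in the style of Keller--Liverani. Since $1$ is a simple isolated eigenvalue of $\cP_0$, its spectral projection admits the contour integral representation $\nu=\frac{1}{2\pi i}\oint_\gamma(z-\cP_0)^{-1}\,dz$, where $\gamma$ is a small circle around $1$ separating it from the rest of $\spec(\cP_0)$. The natural candidate for the eigenprojection of $\cP_\epsilon$ is $\nu_\epsilon:=\frac{1}{2\pi i}\oint_\gamma(z-\cP_\epsilon)^{-1}\,dz$, so the result reduces to (i) existence and uniform boundedness of $(z-\cP_\epsilon)^{-1}$ for $z\in\gamma$ and small $\epsilon$, and (ii) convergence $\nu_\epsilon\to\nu$ strong enough to force $\operatorname{rank}\nu_\epsilon=1$.

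First I would combine the uniform Lasota--Yorke bound (b) with the compact inclusion (a) and invoke Hennion's theorem to conclude that each $\cP_\epsilon$ is quasi-compact on $(B,\|\cdot\|)$ with essential spectral radius at most $\delta^{1/n_0}$, uniformly in $\epsilon\in\mathcal{E}$. Choosing $\gamma$ of radius strictly between $\delta^{1/n_0}$ and $1$, and small enough to exclude all of $\spec(\cP_0)\setminus\{1\}$, makes $\|(z-\cP_0)^{-1}\|$ uniformly bounded on $\gamma$. The main obstacle is then controlling $(z-\cP_\epsilon)^{-1}$ in the strong norm, because hypothesis (c) only gives smallness of $\cP_\epsilon-\cP_0$ as an operator from $(B,\|\cdot\|)$ into $(B,|\cdot|)$, which rules out a naive Neumann series in $\|\cdot\|$. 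The remedy is to iterate the identity
\[
(z-\cP_\epsilon)^{-1}=(z-\cP_0)^{-1}+(z-\cP_\epsilon)^{-1}(\cP_\epsilon-\cP_0)(z-\cP_0)^{-1}
\]
and exploit the two norms simultaneously: at every stage the Lasota--Yorke inequality (b) re-absorbs a $\|\cdot\|$-norm of an iterated image of $\cP_\epsilon$ into a $|\cdot|$-norm, which is then made small via (c). The bookkeeping --- arranging $|\epsilon|$ so small that $\psi(\epsilon)$ beats the decay rate dictated by the spectral gap and the Lasota--Yorke constants --- is where essentially all of the technical work lies.

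With the uniform resolvent bound in hand, $\nu_\epsilon\to\nu$ as operators from $(B,\|\cdot\|)$ into $(B,|\cdot|)$. A classical argument of Kato now applies: once $\nu_\epsilon$ is sufficiently close to $\nu$, the operator $\nu_\epsilon\nu$ restricts to an isomorphism of $\im\nu$, so $\operatorname{rank}\nu_\epsilon\ge 1$. Conversely, by the uniform quasi-compactness, $\nu_\epsilon$ is finite-rank, with rank equal to the total algebraic multiplicity of eigenvalues of $\cP_\epsilon$ enclosed by $\gamma$; the same Kato continuity applied in the reverse direction, together with $\operatorname{rank}\nu=1$, forces $\operatorname{rank}\nu_\epsilon=1$ for all sufficiently small $\epsilon\in\mathcal{E}$. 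Thus $\cP_\epsilon$ has a single simple eigenvalue inside $\gamma$, and this eigenvalue is maximal because the remainder of $\spec(\cP_\epsilon)$ lies in the disk of radius $\delta^{1/n_0}$ supplied by the uniform essential spectral radius bound, completing the proof.
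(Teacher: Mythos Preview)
The paper does not prove this theorem at all: it is quoted verbatim as ``a general perturbation theorem from \cite{Keller}'' (the Keller--Liverani stability paper) and then applied as a black box in \autoref{thm:final.thm}. Your outline is a faithful sketch of the Keller--Liverani argument in that reference, so in that sense it matches the intended proof.

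One small correction to your sketch: the uniform Hennion bound only controls the \emph{essential} spectral radius of $\cP_\epsilon$, so it is not true that ``the remainder of $\spec(\cP_\epsilon)$ lies in the disk of radius $\delta^{1/n_0}$'' --- there could in principle be other discrete eigenvalues in the annulus between $\delta^{1/n_0}$ and $|\gamma|$. The actual Keller--Liverani argument handles this by showing that the full resolvent $(z-\cP_\epsilon)^{-1}$ is uniformly bounded (in the mixed operator norm from $(B,\|\cdot\|)$ to $(B,|\cdot|)$) on any compact set avoiding $\spec(\cP_0)$ and outside the essential disk; this is what forces the discrete spectrum of $\cP_\epsilon$ near $\gamma$ to track that of $\cP_0$, and hence to consist of a single simple eigenvalue. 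Your resolvent identity and two-norm bookkeeping are exactly the right ingredients for this, but the conclusion about maximality comes from this spectral continuity rather than directly from the essential-radius bound.
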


\begin{thm}\label{thm:final.thm}
Let $\cL_t$ be the parametrised transfer operator on $\gta$ defined as in \eqref{eq:defn.L.mkv} with $t$ purely imaginary. Then $\cL_t$ can be decomposed as
\[\cL_t=\beta(t)\mathcal{Q}(t)+\mathcal{R}(t),\]
where $\mathcal{Q}(t)$ is a one-dimensional projection, $\mathcal{R}(t)$ has the spectral radius strictly smaller than $\beta(t)$ with $\mathcal{Q}(t)\mathcal{R}(t)=\mathcal{R}(t)\mathcal{Q}(t)=0$. 
At $t=0$ we have $\beta(0)=1, \mathcal{Q}(0)=\mathcal{Q}$ given by \eqref{eq:defn.Q}.
\end{thm}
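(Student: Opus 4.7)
The plan is to apply the perturbation theorem \autoref{thm:perturbation} with $B=\gta$, norms $\|\cdot\|=\|\cdot\|_\ta$ and $|\cdot|=|\cdot|_\infty$, parameter set $\mathcal{E}$ a neighborhood of $0$ on the imaginary axis, and $\cP_t=\cL_t$. The base point $t=0$ is already handled by \autoref{thm:qs-cpct}: $\cL_0$ has a simple isolated maximal eigenvalue $1$ with rank-one eigenprojection $\mathcal{Q}$. Once the three Keller hypotheses are verified, the theorem will produce $\epsilon_0>0$ such that each purely imaginary $t$ with $|t|<\epsilon_0$ gives $\cL_t$ a simple, isolated maximal eigenvalue $\beta(t)$ and rank-one spectral projection $\mathcal{Q}(t)$; setting $\mathcal{R}(t):=\cL_t-\beta(t)\mathcal{Q}(t)$ will then yield the claimed decomposition, with $\mathcal{Q}(t)\mathcal{R}(t)=\mathcal{R}(t)\mathcal{Q}(t)=0$ and spectral radius of $\mathcal{R}(t)$ strictly smaller than $\beta(t)$ by the standard spectral theory of isolated simple eigenvalues. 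The identifications $\beta(0)=1$ and $\mathcal{Q}(0)=\mathcal{Q}$ are then automatic.

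Hypothesis (a), compactness of the embedding $(\gta,\|\cdot\|_\ta)\hookrightarrow(\gta,|\cdot|_\infty)$, will follow from Arzel\`a--Ascoli on the compact space $\opx$, exactly as used in the proof of \autoref{L:banach}. For hypothesis (b), the uniform Lasota--Yorke inequality, I appeal to \autoref{thm:basic.ineq.Lt} and \autoref{cor:basic.ineq}. The crucial observation is that when $t$ is purely imaginary $\cL_t$ is normalized, so $|e^g|_\infty=1$, and the bound $H(n)=c_2 e^{2\alpha nK}$ extracted in the proof of \autoref{thm:basic.ineq.Lt} depends only on $n$ and the fixed matrix set, not on $\Im t$. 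The resulting constants $C$ and $\delta$ are therefore uniform in $t$ on the imaginary axis, and combining with $|\cL_t^n w|_\infty\le|w|_\infty$ from \autoref{lem:normalized_sup_norm} gives $\|\cL_t^{n_0}w\|_\ta\le \delta^{n_0}\|w\|_\ta+(C+1)|w|_\infty$ uniformly in $t$, for any $n_0$ large enough that $\delta^{n_0}<1$.

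The only genuinely new estimate needed is hypothesis (c), continuity of $t\mapsto \cL_t$ as an operator from $(\gta,\|\cdot\|_\ta)$ into $(\gta,|\cdot|_\infty)$. The two kernels differ only by the factor $e^{t\log\|\SM(\om')x\|}$, and applying $|e^z-1|\le|z|e^{|z|}$ together with the elementary uniform bound $|\log\|\SM(\om')x\||\le\ell(\SM(\om'))\le K$ will yield
\[
   \bigl|e^{t\log\|\SM(\om')x\|}-1\bigr|\le |t|\,K\,e^{|t|K}.
\]
Summing over preimages using $\sum_{\oso}p(\om,\om')=1$ then gives $|\cL_t w-\cL_0 w|_\infty\le |t|Ke^{|t|K}\,\|w\|_\ta$, so I can take $\psi(t):=|t|Ke^{|t|K}$, which is monotone in $|t|$ and vanishes at $0$.

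The step I expect to be most delicate is the uniformity required in hypothesis (b): the argument hinges on the fact that normalization forces $|e^g|_\infty=1$ for purely imaginary $t$, so that the potentially bad factor $|e^g|_\infty^n$ appearing in \autoref{thm:basic.ineq} is harmless. Without this one could not simultaneously keep (b) uniform in $t$ and absorb all $t$-dependence into a single monotone $\psi(t)$ in (c); here the finite matrix set and the normalization conspire to make both work on the same neighborhood $\mathcal{E}$, and the conclusion of \autoref{thm:perturbation} then finishes the proof.
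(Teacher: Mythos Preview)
Your proposal is correct and follows essentially the same route as the paper: apply \autoref{thm:perturbation} with $\|\cdot\|=\|\cdot\|_\ta$, $|\cdot|=|\cdot|_\infty$, verify (a) by Arzel\`a--Ascoli, (b) by \autoref{thm:basic.ineq.Lt}, (c) by a direct estimate on $|e^{t\log\|\SM(\om')x\|}-1|$, and invoke \autoref{thm:qs-cpct} for the base case $t=0$. The only cosmetic difference is in (c): since $t$ is purely imaginary you may use the sharper inequality $|e^{\imag h}-1|\le|h|$ for real $h$, giving $\psi(t)=|t|K$ rather than your $|t|Ke^{|t|K}$, but either choice works.
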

\begin{proof}
We apply \autoref{thm:perturbation} with $|\cdot|=|\cdot|_\infty$ and $\|\cdot\|=\|\cdot\|_\ta$. We need to verify the theorem's three conditions.

For (a), take $\{w_n\}$ to be a bounded sequence in $\gtan$ with $\|w\|_\ta\le 1$. They are uniformly bounded (since $|w_n|_\infty\le 1$) and equicontinuous (since $|w_n|_\ta\le 1$). By the Arzel\`{a}--Ascoli Theorem $\{w_n\}$ is relatively compact in the uniform topology. 

(b) is satisfied because of \autoref{thm:basic.ineq.Lt}. 

Since $|e^{\imag h}-1|^2\le h^2$ for any $h\in\mathbb{R}$ when $t$ is purely imaginary,
\[
  |\cL_tw-\cL_0w|\le |e^t -1|K|w|_\infty\le |t|K\|w\|_\ta.
\]
Thus (c) holds with $\psi(t)=|t|K$.

The fact that $\cL_0$ has a simple maximal eigenvalue $1$ is guaranteed by \autoref{thm:qs-cpct}. Therefore the results follows by applying \autoref{thm:perturbation}.
\end{proof}

\begin{rmk}
As before, by \eqref{eq:exp} and the general perturbation theory of \cite{D.Sch} Section VII.6, we can deduce that $\beta(t)$ is analytic and that $\beta'(0)$ equals the top Lyapunov exponent associated to this Markoviansystem of matrix products.
\end{rmk}

\bibliographystyle{plain}
\bibliography{random_matrices,operator_algebras}
\end{document}